\newtheorem{theorem}{Theorem}
\newtheorem{lemma}[theorem]{Lemma}
\newtheorem{remark}{Remark}
\numberwithin{equation}{section}
\numberwithin{theorem}{section}
\renewcommand{\P}{\operatorname{\mathbb{P}}}
\newcommand{\E}{\operatorname{\mathbb{E}}}
\newcommand{\N}{\mathbb{N}}
\newcommand{\Z}{\mathbb{Z}}
\newcommand{\R}{\mathbb{R}}
\DeclareSymbolFont{rsfs}{U}{rsfs}{m}{n}
\DeclareSymbolFontAlphabet{\mathscrsfs}{rsfs}
\newcommand{\one}{\mathbf{1}}
\newcommand{\bsigma}{\bm{\sigma}}
\newcommand{\eps}{\varepsilon}
\newcommand{\rmd}{\mathrm{d}}
\newcommand{\cR}{\mathcal{R}}
\newcommand{\cC}{\mathcal{C}}
\newcommand{\bg}{\bm{g}}
\newcommand{\bG}{\bm{G}}
\def\Sreg{S_{\textup{reg}}}
\def\mA{\mathcal{A}}
\def\lbq{\underline{q}}
\def\ubq{\overline{q}}
\def\salg{\mbox{\rm\tiny ALG}}
\def\pl{\mbox{\rm\tiny pl}}
\def\rd{\mbox{\rm\tiny rd}}
\def\LHS{\mbox{\small\rm LHS}}
\begin{document}

\title{
Near-optimal shattering in the Ising pure $p$-spin and \\ rarity of solutions returned by stable algorithms
}

\author{Ahmed El Alaoui\thanks{Department of Statistics and Data Science, Cornell University. Email: elalaoui@cornell.edu}}

\date{}
\maketitle

\vspace*{-.3cm} 
\begin{abstract}
We show that in the Ising pure $p$-spin model of spin glasses, shattering takes place at all inverse temperatures $\beta \in (\sqrt{(2 \log p)/p}, \sqrt{2\log 2})$ when $p$ is sufficiently large as a function of $\beta$. Of special interest is the lower boundary of this interval which matches the large $p$-asymptotics of the inverse temperature marking the hypothetical dynamical transition predicted in statistical physics. We show this as a consequence of a `soft' version of the overlap gap property which asserts the existence of a distance gap of points of typical energy from a typical sample from the Gibbs measure. We further show that this latter property implies that stable algorithms seeking to return a point of at least typical energy are confined to an exponentially rare subset of that super-level set, provided that their success probability is not vanishingly small.          
\end{abstract}

 \section{Introduction and main result}  
We consider the mixed $p$-spin model on the binary hypercube. Let $(g_{i_1,\cdots,i_k})_{1 \le i_1,\cdots,i_k \le N, k \ge 2}$ be a collection of i.i.d.\ $N(0,1)$ random variables which we denote by $\bG$, and fix a collection of scalars $(\gamma_{k})_{k=2}^P$. We consider the random Hamiltonian 
\begin{align}\label{eq:hamiltonian}
H_N(\bsigma) =  \sum_{k=2}^{P} \frac{\gamma_k}{N^{(k-1)/2}}  \sum_{1\le i_1 ,\cdots , i_k\le N} g_{i_1,\cdots,i_k}\sigma_{i_1}\cdots\sigma_{i_k} \,,~~~ 
\bsigma \in \{-1,+1\}^N\,.
\end{align}

 Let $\xi(x) := \sum_{k= 2}^P \gamma_k^2 x^k$  be the mixture function of $H_N$; we have $\E[H_N(\bsigma)H_N(\bsigma')] = N\xi(\langle \bsigma,\bsigma'\rangle/N)$. 
 We define the corresponding Gibbs measure at inverse temperature $\beta \ge 0$:
 \begin{align}\label{eq:mu_G}
\mu_{\beta,\bG}(\bsigma) = \frac{1}{Z_N(\beta)} e^{\beta H_N(\bsigma)}\,,~~~\bsigma \in \{-1,+1\}^N\,.
\end{align}

The Gibbs measure $\mu_{\beta,\bG}$ is known to undergo a `static' replica symmetry breaking transition at an inverse temperature $\beta_c$ defined as the largest $\beta$ at which the free energy $\E\log Z_N(\beta)/N$ converges to its annealed value $\beta^2\xi(1)/2+ \log 2$ in the large $N$ limit~\cite{talagrand2000rigorous}. For some mixtures $\xi$ a second `dynamical' transition is expected at a lower inverse temperature $\beta_d$ where the Gibbs measure is expected to exhibit \emph{shattering} between $\beta_d$ and $\beta_c$: a clustering of the mass of the measure into small and well separated subsets, each of exponentially small mass, and which together carry all but an exponentially small fraction of the mass. This transition is also expected to mark the slowdown of natural relaxation dynamics. In the case of the pure $p$-spin model $\xi(x) = x^p$, $p \ge 3$ this dynamical transition was predicted in the statistical physics literature to occur at the value  
\begin{equation}\label{eq:betadyn} 
\beta_d = \inf\left\{\beta >0 : \exists \, q \in (0,1] ~\mbox{s.t.}~ q = \frac{\E\big[\cosh(\beta\sqrt{\xi'(q)} Z)\tanh^2(\beta\sqrt{\xi'(q)} Z)\big]}{\E\big[ \cosh(\beta\sqrt{\xi'(q)} Z)\big]}\right\}\,,
\end{equation}  
where $Z \sim N(0,1)$ via the formalism of the replica method~\cite{montanari2003nature,ferrari2012two}. 
For large values of $p$, $\beta_d$ is asymptotic to $(1+o_p(1)) \sqrt{(2\log p)/p}$ where $o_p(1) \to 0$ as $p \to \infty$; see~\cite{ferrari2012two,alaoui2023samplingpspin}.    
In contrast it is known that $(1-2^{-p})\sqrt{2 \log 2}\le \beta_c \le \sqrt{2 \log 2}$~\cite{talagrand2000rigorous} so the ``shattered phase" is conjectured to occupy a dominant share of the replica-symmetric regime.

\paragraph{Shattering in the Ising $p$-spin.}
Shattering in the Ising pure $p$-spin model was recently established in~\cite{gamarnik2023shattering} for all $\sqrt{\log 2}<\beta <\sqrt{2\log 2}$ and all $p$ sufficiently large. This was shown via first and second moment methods tracking the occurrence of a certain \emph{overlap gap property}. 
In this paper we provide a simple argument showing that shattering occurs for the pure $p$-spin model for all $\beta \in (\sqrt{(2 \log p) /p}, \sqrt{2\log 2})$ when $p$ is sufficiently large, matching the first order term in the asymptotic expansion of $\beta_d$ marking the conjectural dynamical transition: 
\begin{theorem}\label{thm:main1}
Consider the pure $p$-spin case $\xi(x) = x^p$.
For all $\eps>0$, there exists an integer $p_0=p_0(\eps)$ such that for all $p \ge p_0$ and all $(1+\eps)\sqrt{(2 \log p) /p} \le \beta \le (1-\eps)\sqrt{2\log 2}$, there exits a random collection of subsets $\cC_1,\cdots,\cC_m \subset \{-1,+1\}^N$ (measurable with respect to $\bG$) with the following properties: 
\begin{enumerate}
\item Small diameter: $\max_{\bsigma_1,\bsigma_2 \in \cC_i} d_H(\bsigma_1,\bsigma_2) \le rN$ for all $i \le m$.
\item Pairwise separation: $\min_{1\le i \neq j \le m} d_H(\cC_i,\cC_j) \ge R N$. 
\item Small Gibbs mass: $ \E \max_{1\le i\le m} \mu_{\beta,\bG}(\cC_i) \le e^{-cN}$.  
\item Collective coverage of the Gibbs measure: $\E\mu_{\beta,\bG}\big(\bigcup_{i=1}^m \cC_i\big) \ge 1 - e^{-cN}$.
\end{enumerate}
In the above, $d_H$ is the Hamming distance, $c = c(\eps)>0$ and there exists $\delta = \delta(\eps)>0$ such that one can take $r = 1/p^{1+\delta}$ and $R = \delta/p$. 
\end{theorem}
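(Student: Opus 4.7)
My plan is to deduce the theorem from a ``soft overlap gap property'' (soft OGP) for the Gibbs measure, followed by a short combinatorial cluster construction. Fix a small constant $\eta = \eta(\eps, p) > 0$ and set $T_\eta := \{\sigma : |H_N(\sigma)/N - \beta| \le \eta\}$ and $S := \{\sigma : H_N(\sigma) \ge (\beta - \eta)N\}$. The core estimate I would pursue is the bound
\begin{equation}\label{eq:sketch-soft-OGP}
\E\!\left[\sum_{\sigma^* \in T_\eta} \mu_{\beta,\bG}(\sigma^*) \,\big|\{\sigma \in S : d_H(\sigma^*, \sigma) \in (rN, RN)\}\big|\right] \le e^{-cN}
\end{equation}
for some $c = c(\eps) > 0$, writing $h(x) := -x\log x - (1-x)\log(1-x)$ for the binary entropy.

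I would establish \eqref{eq:sketch-soft-OGP} by an annealed first moment argument. The replica-symmetric lower bound $Z_N(\beta) \ge \exp(N(\log 2 + \beta^2/2 - \eps))$, valid with probability $1 - e^{-\Omega(N)}$, lets one replace $\mu_{\beta,\bG}(\sigma^*)$ by $e^{\beta H_N(\sigma^*) - N(\log 2 + \beta^2/2 - \eps)}$, and on $\{\sigma^* \in T_\eta\}$ one further has $e^{\beta H_N(\sigma^*)} \le e^{\beta(\beta + \eta)N}$. For fixed $(\sigma^*, \sigma)$ at distance $dN$ the pair $(H_N(\sigma^*), H_N(\sigma))$ is a centered bivariate Gaussian of variances $N$ and covariance $N\rho$ with $\rho = \rho(d) := (1-2d)^p$. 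The infimum of the Gaussian rate function over the region $\{h_1 \ge (\beta - \eta)N,\, h_2 \in [(\beta-\eta)N, (\beta+\eta)N]\}$ is attained at the corner $(h_1, h_2) = ((\beta - \eta)N, (\beta - \eta)N)$ with value $(\beta - \eta)^2/(1+\rho) = \beta^2/(1+\rho) - O(\eta)$, for every $\rho \in [0,1)$. Multiplying the per-pair bound by the $2^N \binom{N}{dN}$ pairs at distance $dN$ and summing over $d$ bounds the left-hand side of \eqref{eq:sketch-soft-OGP} by
\begin{equation}
\sum_{d \in (r, R)} \exp\!\left(N\!\left[h(d) \,-\, \frac{\beta^2}{2}\cdot \frac{1 - (1-2d)^p}{1 + (1-2d)^p} \,+\, O(\eta + \eps)\right]\right).
\end{equation}

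The heart of the argument is to verify that the bracketed exponent is bounded above by a negative constant uniformly for $d \in (r, R) = (1/p^{1+\delta}, \delta/p)$. Parametrizing $d = c/p$ one has $(1 - (1-2d)^p)/(1+(1-2d)^p) \to \tanh(c)$ as $p \to \infty$ and $h(d) \le (c/p)\log(p/c)(1+o(1))$; since $c/\tanh(c) \le 1 + O(\delta^2)$ on $(0, \delta)$, the worst case arises at the small-$c$ end $c = 1/p^\delta$ and reduces to the threshold $\beta^2 > 2(1 + \delta)\log(p)/p$ up to lower-order terms. This is implied by $\beta \ge (1+\eps)\sqrt{2\log(p)/p}$ as soon as $(1+\delta) < (1+\eps)^2$, and $\eta$ can be chosen as a further small multiple of $\beta/p^\delta$ so that $\beta\eta$ is dominated by the gap $\beta^2(1-\rho)/(2(1+\rho))\asymp \beta^2/p^\delta$ at $d=r$. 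Markov's inequality applied to \eqref{eq:sketch-soft-OGP} then produces a $\bG$-measurable set $B^* \subseteq T_\eta$ with $\mu_{\beta,\bG}(B^*) \ge 1 - e^{-c'N}$ such that no $\sigma^* \in B^*$ has any element of $S$ at distance in $(rN, RN)$.

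The cluster construction is then immediate. Declare $\sigma_1^*, \sigma_2^* \in B^*$ equivalent if $d_H(\sigma_1^*, \sigma_2^*) \le rN$; applying the soft OGP at $\sigma_1^*$ forces any $\sigma_2^* \in B^*$ within distance $RN$ to lie within $rN$, so the relation is transitive (triangle inequality plus $2r \ll R$) and distinct classes are pairwise at distance $\ge RN$. Taking the classes as $\cC_1, \ldots, \cC_m$ yields diameter $\le rN$, pairwise separation $\ge RN$, and coverage $\mu_{\beta,\bG}(\bigcup_i \cC_i) = \mu_{\beta,\bG}(B^*) \ge 1 - e^{-c'N}$. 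Each $\cC_i$ is contained in a Hamming ball of radius $rN$, so combining the volume estimate $\binom{N}{rN} = e^{Nh(r) + o(N)}$ with the first-moment bound $E_\infty \le \sqrt{2\log 2}$ on the ground-state energy density and the same lower bound on $Z_N(\beta)$ gives
\begin{equation}
\mu_{\beta,\bG}(\cC_i) \le \exp\!\left(N\big[h(r) - \tfrac{1}{2}(\beta - \sqrt{2\log 2})^2 + o(1)\big]\right).
\end{equation}
Under $\beta \le (1-\eps)\sqrt{2\log 2}$ the bracketed expression is at most $h(r) - \eps^2 \log 2 + o(1)$, strictly negative for $p$ large since $h(r) = O(\log(p)/p^{1+\delta}) \to 0$. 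The main technical obstacle is the soft-OGP estimate itself: the Gibbs tilt on $\sigma^*$ must be executed with the sharp corner rate $\beta^2/(1+\rho)$, and this is precisely why one imposes the two-sided constraints $T_\eta$ and $S = \{H_N \ge (\beta-\eta)N\}$ (rather than, say, $\{H_N \ge \beta N\}$): with the stricter one-sided constraint the constrained minimum saturates at $\beta^2/2$ for the close-by $\rho$'s and the bound no longer decays at $d=r$.
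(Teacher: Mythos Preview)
Your approach is correct and reaches the same threshold, but it differs from the paper's in how the soft OGP is established. The paper proves~\eqref{eq:sketch-soft-OGP} by passing to a \emph{planted model} via contiguity (Lemma~\ref{lem:contig0}): under the planted law one has $H_N(\sigma) = \beta N(1-2d)^p + \tilde H_N(\sigma)$ with $\tilde H_N$ an independent copy of the Hamiltonian, and the existence of a neighbor in $S$ is ruled out by bounding $\max_{\text{slice}}\tilde H_N$ via the Sudakov--Fernique comparison to a linear process, then Gaussian concentration of the maximum. Your route instead keeps the null model, replaces $Z_N$ by its typical value, and computes the annealed bivariate Gaussian exponent directly; the crucial observation---which you correctly flag---is that the two-sided window $T_\eta$ on $H_N(\sigma^*)$ is needed to get the rate $\beta^2/(1+\rho)$ rather than the weaker $\beta^2[1-(1-\rho)^2]/2$ coming from the unconstrained tilt. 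Both calculations land on the condition $h(d) < \tfrac{\beta^2}{2}\cdot\tfrac{1-\rho}{1+\rho}$ (or its equivalent) and hence on the threshold $(1+o_p(1))\sqrt{2\log p/p}$. Your argument is more elementary---no planting, no comparison inequality---while the paper's planted decomposition is what makes the extension to correlated disorder $\bG_\tau$ (needed for Theorem~\ref{thm:main2}) and to the sphere transparent. One technical point to watch in your write-up: the ``bad $Z_N$'' event must be handled by noting that the quantity in~\eqref{eq:sketch-soft-OGP} is at most $e^{Nh(R)}$ deterministically (or, more simply, that the \emph{probability} version $\E\mu_{\beta,\bG}(\{\sigma^*\in T_\eta: X_{\sigma^*}\ge 1\})$ is at most $1$), so that the parameters $\eps_0,\eta$ can be chosen consistently; the resulting rates are small in $p$ but positive, as they also are in the paper's proof. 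The cluster construction and the four verifications are essentially identical to the paper's Section~\ref{sec:shattering}; your small-mass bound via the first-moment ground-state estimate $E_\infty\le\sqrt{2\log 2}$ is a minor variant of the paper's use of the upper energy bound from $T_\eta$.
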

We remark that the scaling of the radii $r$ and $R$ are such that the clusters $\cC_i$ are almost ``point-like" and are at a much larger distance apart than their diameter for large $p$. 

The above theorem is a consequence of a \emph{soft overlap gap property} that appears in a general mixture $\xi$ above an inverse temperature $\bar{\beta}_d$ defined as follows:
Let $\varphi$ and $\Phi$ be respectively the density and cumulative function of the standard normal distribution, and define  
\begin{equation}\label{eq:upperbetadyn}
\bar{\beta}_d = \inf_{q \in (0,1)} \frac{2\sqrt{\xi'(1)}\varphi\big(\Phi^{-1}\big(\frac{1+q}{2}\big)\big)}{\xi(1)-\xi(q)} \, .
\end{equation}  
In the special case of the pure $p$-spin, $\bar{\beta}_d \le (1+o_p(1))\sqrt{(2\log p)/p}$ (see Eq.~\eqref{eq:beta_d0}).

We mention that for the spherical $p$-spin model, the available proofs of shattering rely on different techniques based on the analysis of the stationary points of the TAP free energy~\cite{arous2024shattering} or the monotonicity of the Franz--Parisi potential~\cite{alaoui2023shattering}. We show in Section~\ref{sec:spherical} that the argument used to prove Theorem~\ref{thm:main1} can also be used in the spherical case. This would provide an alternative proof of the shattering result of~\cite{alaoui2023shattering} with an explicit lower bound on $\beta$. In contrast with the Ising case however, the spherical analogue of $\bar{\beta}_d$ is asymptotic to $2.216 > \sqrt{e}$, the latter being the conjectural value of the inverse temperature marking the dynamical transition for large $p$ in the spherical case.

\paragraph{Rarity of solutions found by Lipschitz algorithms.} 
As mentioned earlier, shattering has algorithmic consequences pertaining to hardness of approximate sampling: Glauber dynamics (or Langevin dynamics in the spherical case) slow down exponentially and \emph{stable} algorithms are not able to approximately sample from the Gibbs measure when shattering occurs. This is because shattering implies \emph{transport disorder chaos} of the Gibbs measure--a notion of instability of $\mu_{\beta,\bG}$ in the Wasserstein-2 metric under small perturbations of the disorder $\bG$--which in turn implies that stable algorithms cannot approximate $\mu_{\beta,\bG}$ in this same metric; see~\cite[Section 3]{alaoui2023samplingpspin} and~\cite[Section 5]{alaoui2023shattering} for more detail on this implication.        
A second aim of this paper is to investigate the behavior of \emph{optimization} or \emph{search} algorithms in the shattered phase. Suppose we are interested in efficiently finding a point $\bsigma$ whose energy is approximately typical under $\mu_{\beta,\bG}$. More precisely, for $\beta < \beta_c$ we consider the problem of efficiently returning a point in the super-level set 
 \begin{equation}\label{eq:levelset}
 S_{\beta}(\bG) = \Big\{\bsigma \in \{-1,+1\}^N :  H_N(\bsigma) \ge \beta \xi(1) N\Big\} \, ,
 \end{equation}
 on input $\bG$, $\beta$ and $\xi$. The right-hand side of the inequality in the above definition is the typical energy for a configuration at inverse temperature $\beta$, given by the derivative in $\beta$ of the limiting free energy $\beta^2\xi(1)/2 +\log 2$ for $\beta<\beta_c$. 
 It is not difficult to show that $\mu_{\beta,\bG}$ is exponentially concentrated on $S_{(1-\eps)\beta}(\bG)$ for any $\eps>0$:
\begin{equation}\label{eq:concentration0} 
\E\mu_{\beta,\bG}\big(S_{(1-\eps)\beta}(\bG)\big) \ge 1- e^{-cN}\,,~~~ c = c(\beta,\eps)>0\,,
\end{equation}
heuristically suggesting that shattering of the Gibbs measure must impact the behavior of search algorithms on $S_{(1-\eps)\beta}(\bG)$ (in fact the shattering clusters $(\cC_i)_{1\le i \le m}$ for $\mu_{\beta,\bG}$ of Theorem~\ref{thm:main1} in the pure $p$-spin case all belong to $S_{(1-\eps)\beta}(\bG)$ for some $\eps>0$ small enough, see Section~\ref{sec:shattering}).   
Our main result is this direction is that whenever $\beta > \bar{\beta}_d$ where $\bar{\beta}_d$ is defined in Eq.~\eqref{eq:upperbetadyn}, stable search algorithms must land in an exceptionally rare region of the search space $S_{\beta}(\bG)$, provided that they have a high probability of returning a solution at all.        

 We say that an algorithm $\mA : \R^{M} \to \R^N$ is $L$-Lipschitz if 
 \begin{equation}\label{eq:lipschitzalg}
  \|\mA(\bG) - \mA(\bG')\| \le L \|\bG - \bG'\| \, ,~~~ \forall\, \bG,\bG' \in \R^{M}  \,, 
   \end{equation}
 where $\|\cdot\|$ is the Euclidean norm, and $M = \sum_{k=2}^P N^k$. 
 
 \begin{theorem}\label{thm:main2}
Let $\beta \in(\bar{\beta}_{d}, \beta_c)$, $L > 0$, $B > 0$. There exists $\eps = \eps(\beta,\xi)>0$ such that for all $\beta' \in [(1-\eps)\beta,\beta]$ there exists a subset $E(\bG) \subset S_{\beta'}(\bG)$ depending only on $\beta', L, \xi, \bG$ with exponentially small Gibbs mass relative to $\mu_{\beta,\bG}$:
  \begin{equation}\label{eq:rare_subset}
     \E \mu_{\beta,\bG}(E(\bG)) \le e^{-c_0N}\,,~~~ c_0 = c_0(\beta,L,\xi)>0\,, 
\end{equation}
 such that if $\mA$ is a $L$-Lipschitz algorithm satisfying $\sup_{\bG}\|\mA(\bG)\| \le \sqrt{BN}$ and $\P(\mA(\bG) \notin S_{\beta}(\bG)) \le \eps_0$ for some $\eps_0= \eps_0(B,\beta,\xi)>0$, we have 
 \begin{equation}\label{eq:failure}
     \P\big(\mA(\bG) \in E(\bG)\big) + 4 \P\big(\mA(\bG) \notin S_{\beta'}(\bG)\big) \ge 1- e^{-c_1N}\,,~~~ c_1 = c_1(\beta,L,\xi)>0\,. 
\end{equation}
 \end{theorem}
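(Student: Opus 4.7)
The strategy combines Lipschitz stability of $\mA$ under an Ornstein--Uhlenbeck perturbation of the Gaussian disorder with a two-disorder version of the soft overlap gap property (OGP) that underlies Theorem~\ref{thm:main1}. First, I will work with the coordinatewise sign $\bsigma(\bG) := \operatorname{sign}(\mA(\bG)) \in \{-1,+1\}^N$. The $\ell^2$ bound $\|\mA(\bG)\| \le \sqrt{BN}$ together with the $L$-Lipschitzness of $\mA$, and a small smoothing argument to circumvent non-smoothness of the sign at zero crossings, yield a Hamming stability estimate: for every $\eta>0$ there exists $\tau = \tau(L,B,\eta)>0$ such that, setting $\bG_\tau := \sqrt{1-\tau^2}\,\bG + \tau\widetilde{\bG}$ with $\widetilde{\bG}$ an independent copy of $\bG$,
\begin{equation*}
  \P\!\left(d_H(\bsigma(\bG),\bsigma(\bG_\tau)) > \eta N\right) \le e^{-cN}.
\end{equation*}
The same control on the $\ell^2$ rounding error will accommodate a small energy slack so that we can pass from success on $S_{\beta}(\bG)$ to success on $S_{\beta'}(\bG)$ with $\beta' = (1-\eps)\beta$.

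The key step is a correlated-disorder version of the soft OGP: for $\beta\in(\bar\beta_d,\beta_c)$ and $\tau$ sufficiently small, there exist $\eta_0,c>0$ such that for every measurable map $(\bG,\widetilde{\bG})\mapsto\bsigma_\star$ with $\bsigma_\star\in S_{\beta'}(\bG_\tau)$,
\begin{equation*}
  \E\,\mu_{\beta,\bG}\!\left(\{\bsigma\in S_{\beta'}(\bG):d_H(\bsigma,\bsigma_\star)\le\eta_0 N\}\right) \le e^{-cN}.
\end{equation*}
I will obtain this by an annealed second moment on pairs of replicas $(\bsigma,\bsigma_\star)$ at the correlated disorders $(\bG,\bG_\tau)$: expressing the expected mass as a sum over overlap values $q = \langle\bsigma,\bsigma_\star\rangle/N$ and identifying the resulting variational problem with the one defining $\bar\beta_d$ in~\eqref{eq:upperbetadyn}, the condition $\beta > \bar\beta_d$ forces the resulting exponent to be strictly negative for all relevant $q$, uniformly in small $\tau$.

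Given this two-disorder OGP, I will define $E(\bG)$ through a universal construction over $L$-Lipschitz admissible maps and a small threshold, so that $E(\bG)$ depends only on $\beta',L,\xi,\bG$. Specializing the displayed bound to $\bsigma_\star = \bsigma(\bG_\tau)$ (averaged over admissible $\mA$ and over $\widetilde{\bG}$) and applying Markov's inequality at threshold $e^{-cN/2}$ will deliver $\E\,\mu_{\beta,\bG}(E(\bG)) \le e^{-c_0 N}$ with $c_0 = c/2$, yielding~\eqref{eq:rare_subset}. To show $\bsigma(\bG)\in E(\bG)$ on the success event, I will combine stationarity of $\bG_\tau$ with the Hamming stability of Step~1: both $\bsigma(\bG)\in S_{\beta'}(\bG)$ and $\bsigma(\bG_\tau)\in S_{\beta'}(\bG_\tau)$ hold simultaneously with probability at least $1-2\eps_0$, and on this intersection $d_H(\bsigma(\bG),\bsigma(\bG_\tau))\le\eta_0 N$ outside a further set of probability $e^{-cN}$; a conditional Markov step over $\widetilde{\bG}$ then upgrades this to the $E(\bG)$-defining threshold, with the factor $4$ in~\eqref{eq:failure} absorbing the Markov loss and the slack between $S_\beta$ and $S_{\beta'}$.

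The principal obstacle is the correlated-disorder soft OGP. The single-disorder version used in the proof of Theorem~\ref{thm:main1} does not directly apply since $\bsigma_\star$ is tied to a \emph{different} disorder $\bG_\tau$, so one must run the annealed second moment on the joint Gaussian ensemble and verify that $\bar\beta_d$ is exactly the right threshold uniformly in $\tau$ small, extracting the variational problem~\eqref{eq:upperbetadyn} from the joint covariance structure. A secondary technical difficulty is propagating Lipschitz stability through coordinatewise rounding at zero crossings of $\mA(\bG)$, which will be handled via the smoothing/averaging step in Step~1 and is where the $\ell^2$ bound $\|\mA(\bG)\|\le\sqrt{BN}$ is used.
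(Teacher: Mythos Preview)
Your plan misidentifies the role of the soft OGP and therefore misses the mechanism that actually drives the proof. The soft OGP of Theorem~\ref{thm:soft_ogp0} (and its correlated-disorder extension in Theorem~\ref{thm:soft_ogp}) does \emph{not} assert that small Hamming balls around a point of $S_{\beta'}(\bG_\tau)$ have small Gibbs mass; it asserts that the set of $\bsigma$ admitting some $\bsigma'\in S_{\beta'}(\bG_\tau)$ at \emph{intermediate} overlap $\langle\bsigma,\bsigma'\rangle/N\in(\lbq,\ubq)$ has small Gibbs mass. Your proposed inequality $\E\,\mu_{\beta,\bG}(\{\bsigma:d_H(\bsigma,\bsigma_\star)\le\eta_0 N\})\le e^{-cN}$ is, for small $\eta_0$, a trivial entropy bound true for \emph{every} $\beta<\beta_c$ and unrelated to the variational problem~\eqref{eq:upperbetadyn}; and once you existentially quantify over $\bsigma_\star\in S_{\beta'}(\bG_\tau)$ (which you must, to make $E(\bG)$ algorithm-free), the resulting event has Gibbs mass close to $1$ for small $\tau$, since a typical $\bsigma\sim\mu_{\beta,\bG}$ itself lies in $S_{\beta'}(\bG_\tau)$ and witnesses $\bsigma_\star=\bsigma$. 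So the ``universal construction over $L$-Lipschitz admissible maps'' cannot yield a set $E(\bG)$ that is both independent of $\mA$ and exponentially rare.

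What the paper actually does is interpolate in $\tau$ and use the intermediate value theorem on the correlation function $\chi_N(\tau)=\frac{1}{N}\E\langle\mA(\bG),\mA(\bG_\tau)\rangle$: since $\chi_N(0)\approx 1>\ubq$ and $\chi_N(1)<\lbq$ (the latter via Eq.~\eqref{eq:tauequal1}), there is some $\tau$ on a fixed grid at which $\chi_N(\tau)\in(\lbq,\ubq)$; Gaussian concentration then places the random overlap $\langle\mA(\bG),\mA(\bG_\tau)\rangle/N$ in the forbidden gap. The exceptional set is defined purely in terms of the landscape, $E_\tau(\bG)=\{\bsigma\in S_{\beta'}(\bG):\P(\exists\,\bsigma'\in S_{\beta'}(\bG_\tau),\,\langle\bsigma,\bsigma'\rangle/N\in(\lbq,\ubq)\mid\bG)\ge e^{-cN/2}\}$, with $E(\bG)$ a finite union over grid points; Theorem~\ref{thm:soft_ogp} plus Markov gives~\eqref{eq:rare_subset}, and $\mA(\bG_\tau)$ serves as a witness forcing $\mA(\bG)\in E_\tau(\bG)$ on the success event. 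Your stability-then-closeness scheme runs in the wrong direction: Lipschitzness is used not to keep the outputs close, but to guarantee that the overlap traverses the gap continuously as $\tau$ varies.
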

 
 \begin{remark} 
 We highlight the contrast between Eq.~\eqref{eq:concentration0} and Eq.~\eqref{eq:rare_subset} where $E(\bG)$ is an exceptionally rare subset of $S_{\beta'}(\bG)$ from the point of view of $\mu_{\beta,\bG}$. Moreover if one alters the definition of $S_{\beta}(\bG)$ to be the set of point of energies in the interval  $[(1-\eps)\beta \xi(1) N, (1+\eps)\beta \xi(1) N]$ (a near level set of typical energies), this new set--call it $S_{\beta,\eps}(\bG)$--still verifies the concentration bound~\eqref{eq:concentration0}, and the above theorem still holds with $S_{\beta'}(\bG)$ replaced by $S_{\beta,\eps}(\bG)$ and the bound~\eqref{eq:rare_subset} replaced by a statement about the relative cardinalities: $\E \big[|E(\bG)|/|S_{\beta,\eps}(\bG)|\big] \le e^{-c_0 N}$, $c_0 = c_0(\beta,\eps,L,\xi)>0$. This is because of the fact that $\mu_{\beta,\bG}$ behaves like the uniform measure on $S_{\beta,\eps}(\bG)$ up to multiplicative errors of order $e^{c'N}$ where $c' = c'(\eps) \to 0$ as $\eps \to 0$.    
 \end{remark}                 
  
An inspection of the proof of the above theorem in Section~\ref{sec:exceptional} reveals that Lipschitzness of the algorithm can be relaxed to the property of \emph{overlap concentration}, as already observed in~\cite{huang2025tight} in the context proving lower bounds on optimization as we mention in the next paragraph. The latter property requires that the inner product $\langle \mA(\bG),\mA(\bG')\rangle/N$ where $(\bG,\bG')$ are two correlated copies of the disorder concentrate exponentially in $N$ around its expectation. This is more general than the Lipschitz property and can potentially be the case of algorithms lacking uniform stability in the input.              

The \emph{algorithmic threshold} $E_{\salg}$ of optimizing the Hamiltonian $H_N$ is the maximum energy achievable by a Lipschitz algorithm. It is known that for $E< E_{\salg}$, a Lipschitz algorithm returning a point in $S_{\beta}(\bG)$ with $E=\beta\xi(1)$, exists in the form of incremental Hessian ascent algorithms and incremental approximate message passing~\cite{subag2021following,montanari2021optimization,ams2020,jekel2024potential}, and that if  $E> E_{\salg}$ then no Lipschitz algorithm (with $L = O_N(1)$) can find a point in $S_{\beta}(\bG)$ with probability higher than $e^{-c'N}$, $c' = c'(\beta,L,\xi)>0$~\cite{huang2025tight}. The value of $E_{\salg}$ is given by a stochastic control problem whose dual form is an extended Parisi-type variational formula which is rather difficult to analyze~\cite{ams2020}. We currently do not know how $\bar{\beta}_d\xi(1)$ and $E_{\salg}$ compare for Ising spin glasses, even for the pure $p$-spin model. Our result is most meaningful for mixtures where $ \bar{\beta}_{d}< E{\salg}/\xi(1)$. We comment in Section~\ref{sec:spherical} on the spherical case where $E_{\salg}$ is explicitly known in the pure $p$-spin model.

 \paragraph{A soft overlap gap property.} Theorems~\ref{thm:main1} and~\ref{thm:main2} are both a consequence of a `soft' version of \emph{the overlap gap property} (OGP) which we prove holds for any mixture $\xi$ for all $\beta \in (\bar{\beta}_d,\beta_c)$.  
 In a random optimization problem, the overlap gap property asserts that there exists \emph{no} cluster of points in the solution space (here, $S_{\beta}(\bG)$) at predetermined pairwise distances from each other. Its simplest form asserts the non existence of pairs of points at some fixed distance.   
 This concept was first used to rule out the existence of some class of efficient algorithms for finding large independent sets on a random regular graph~\cite{gamarnik2014limits}. It was then extended to many other random optimization problems where this technique currently provides the best computational lower bounds against large families of algorithms~\cite{gamarnik2017performance,gamarnik2021circuit,gamarnik2020optimization}; for instance a lower bound on Lipschitz algorithms at the algorithmic threshold $E_{\salg}$ discussed above was obtained via a sophisticated version of this argument called the \emph{branching} OGP~\cite{huang2025tight}. We refer the reader to~\cite{gamarnik2021survey} for a survey and further references.          

 In the Ising pure $p$-spin model with large $p$ it is known that the OGP for pairs of points undergoes a sharp phase transition in $S_{\beta}(\bG)$ where it is present with high probability for energies $\beta > \sqrt{\log 2}$ and absent with high probability if $\beta < \sqrt{\log 2}$~\cite{gamarnik2023shattering,kizildaug2023sharp}. (More generally the more sophisticated version of ``$m$-OGP" undergoes a sharp transition at $\beta_m = \sqrt{(2 \log 2)/m}$.) This is precisely the reason the shattering argument used in~\cite{gamarnik2023shattering} is limited to the range $(\sqrt{\log 2},\sqrt{2\log 2})$.  
 Our main technical tool is the observation that the following relaxed (or soft) version of OGP holds all the way down to $\bar{\beta}_d$: a \emph{typical} point from $\mu_{\beta,\bG}$ will have no point of \emph{typical} energy (or more) at some prescribed distance from it, with high probability:                 
   
\begin{theorem}\label{thm:soft_ogp0}
 For $\bar{\beta}_{d} < \beta < \beta_c$, there exists $0<\lbq<\ubq <1$ depending on $\beta$ and $\xi$ such that  
  \begin{equation}\label{eq:soft_ogp0}
     \E \mu_{\beta,\bG}\Big(\Big\{\bsigma : \exists\, \bsigma' \,\textup{s.t.}\, H_N(\bsigma') \ge \beta \xi(1) N ,\,  \lbq \le \langle \bsigma,\bsigma'\rangle/N  \le \ubq \Big\}\Big) \le e^{-cN}\,, 
\end{equation}
where $c = c(\beta,\xi)> 0$. 
\end{theorem}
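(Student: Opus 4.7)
The plan is a first-moment bound on the Gibbs mass of the bad event
\[
B(\bG) := \big\{\bsigma : \exists\,\bsigma'\text{ with } H_N(\bsigma')\ge \beta\xi(1)N \text{ and } \langle\bsigma,\bsigma'\rangle/N\in[\lbq,\ubq]\big\}.
\]
Since $\beta<\beta_c$, standard free energy concentration gives $Z_N(\beta)\ge \exp\{N(\log 2+\beta^2\xi(1)/2)-\eps N\}$ with probability at least $1-e^{-c(\eps)N}$. Writing $\E \mu_{\beta,\bG}(B) = \E[Z_N(\beta)^{-1}\sum_{\bsigma} e^{\beta H_N(\bsigma)}\mathbf{1}_{B}(\bsigma)]$ and substituting this lower bound, the task reduces to an annealed estimate on $\sum_\bsigma \E[e^{\beta H_N(\bsigma)}\mathbf{1}_{B}(\bsigma)]$. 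I would then apply a union bound over the witness, $\mathbf{1}_B(\bsigma)\le \sum_{\bsigma':\langle\bsigma,\bsigma'\rangle/N\in[\lbq,\ubq]}\mathbf{1}[H_N(\bsigma')\ge\beta\xi(1)N]$, reducing to a Gaussian pair expectation $\E[e^{\beta H_N(\bsigma)}\mathbf{1}[H_N(\bsigma')\ge \beta\xi(1)N]]$ for each pair at fixed overlap $q$.

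For each such pair, $(H_N(\bsigma), H_N(\bsigma'))$ is centered Gaussian with variances $N\xi(1)$ and covariance $N\xi(q)$; the exponential tilt $e^{\beta H_N(\bsigma)}$ shifts the mean of $H_N(\bsigma')$ to $\beta N\xi(q)$ without altering its variance, and the upper tail at $\beta\xi(1)N$ is controlled by the Gaussian Mill's ratio, producing a density factor $\varphi$ at an argument proportional to $\beta\sqrt{N}(\xi(1)-\xi(q))/\sqrt{\xi(1)}$. Combining with the pair count $2^N\binom{N}{(1-q)N/2}\simeq \exp\{N[\log 2 + H_{\mathrm{bin}}((1-q)/2)]\}$ (where $H_{\mathrm{bin}}$ is the binary entropy) and dividing by $\E Z_N\simeq \exp\{N[\log 2 + \beta^2\xi(1)/2]\}$ yields an upper bound on $\E\mu_{\beta,\bG}(B)$ of the form $\exp\{N\Psi(q,\beta)+o(N)\}$ for an explicit rate function $\Psi$, and one needs $\Psi(q,\beta)<0$ uniformly over $q\in[\lbq,\ubq]$. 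The definition~\eqref{eq:upperbetadyn} is calibrated precisely so that this last condition holds on a small closed interval $[\lbq,\ubq]$ around an approximate minimizer $q^*$ of the infimum as soon as $\beta>\bar\beta_d$, and uniform negativity then follows from continuity.

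The hardest step will be sharpening the Gaussian tail calculation to produce exactly $\bar\beta_d$ rather than the coarser threshold $\inf_q\sqrt{2\xi(1)H_{\mathrm{bin}}((1-q)/2)}/(\xi(1)-\xi(q))$ that a naive bound $\bar\Phi(a)\le e^{-a^2/2}$ would yield. The natural refinement is to condition $H_N(\bsigma)$ on its $\mu_\beta$-typical value $\beta\xi(1)N$ (so that the conditional variance of $H_N(\bsigma')$ drops from $N\xi(1)$ to $N[\xi(1)-\xi(q)^2/\xi(1)]$, sharpening the Gaussian tail), and to use the natural Gaussian parameterization of overlaps via $q=\Pr[|Z|\le z_q]$, i.e.\ $z_q=\Phi^{-1}((1+q)/2)$. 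In this calculation the normalizing constants produce the factor $\varphi(\Phi^{-1}((1+q)/2))$ appearing in the numerator of~\eqref{eq:upperbetadyn}, while the prefactor $2\sqrt{\xi'(1)}$ matches the typical magnitude of a single spin-flip increment $H_N(\bsigma^{(i)})-H_N(\bsigma)$ which sets the scale of the conditional Gaussian. Controlling the subexponential prefactors tightly enough to match~\eqref{eq:upperbetadyn}, and then translating this sharpened threshold back into the choice of $[\lbq,\ubq]$, will be the main technical burden.
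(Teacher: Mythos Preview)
Your outer structure is fine and essentially equivalent to the paper's: tilting by $e^{\beta H_N(\bsigma)}$ and dividing by the annealed $Z_N$ is the same as passing to the planted model and using contiguity (Lemma~\ref{lem:contig0}), so the task correctly reduces to bounding, for a fixed $\bsigma$ and fixed overlap $q$, the probability that \emph{some} $\bsigma'$ at overlap $q$ has $\tilde H_N(\bsigma')\ge \beta(\xi(1)-\xi(q))N$, where $\tilde H_N$ has i.i.d.\ $N(0,1)$ disorder. The gap is in how you propose to control this probability. A union bound over $\bsigma'$ ignores the correlations among the $\tilde H_N(\bsigma')$ and yields exactly the coarser threshold $\inf_q \sqrt{2\xi(1)H_{\mathrm{bin}}((1-q)/2)}/(\xi(1)-\xi(q))$ you identified; this is strictly larger than $\bar\beta_d$ (for $q$ near $1$, the gap is of order $\sqrt{1-q}$), so the union bound cannot reach the stated theorem. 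Your proposed remedies---conditioning on $H_N(\bsigma)$ to shrink the variance, or reparameterizing overlaps via $\Phi^{-1}$---do not repair this: after conditioning you still sum $e^{NH_{\mathrm{bin}}((1-q)/2)}$ marginal tails, and no amount of sharpening a \emph{single-pair} tail can beat the entropy term.

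The paper's key step is to bound the \emph{maximum} itself rather than the union of tails: it compares $(\tilde H_N(\bsigma'))_{\bsigma'}$ to the linear process $(\sqrt{\xi'(1)}\,\langle \bg,\bsigma'\rangle)_{\bsigma'}$ via Sudakov--Fernique, using the increment bound $\E[(\tilde H_N(\bsigma')-\tilde H_N(\bsigma''))^2]\le \xi'(1)\E[(\langle\bg,\bsigma'\rangle-\langle\bg,\bsigma''\rangle)^2]$. This is where $\sqrt{\xi'(1)}$ enters---not from single spin flips. The constrained maximum of the linear process is then computed exactly by Lagrangian duality, $\inf_h\{\E|g_1+h|-hq\}=2\varphi(\Phi^{-1}((1+q)/2))$, which is where the $\varphi\circ\Phi^{-1}$ factor comes from---not from ``normalizing constants'' in a tail bound. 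Replacing your union bound by this Gaussian comparison (and then using Gaussian concentration of the maximum around its mean) is the missing idea.
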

We remark that the above theorem also holds if $\bG$ is replaced by a correlated copy $\bG'$ of $\bG$ in the construction of $H_N$ in the above event. 
 The bound~\eqref{eq:soft_ogp0} also holds if the energy lower bound on $H_N(\bsigma')$ is taken at an inverse temperature $\beta'$ slightly lower than $\beta$; see Theorem~\ref{thm:soft_ogp}.   
While OGP can be used to rule out the possibility of finding solutions using Lipschitz algorithms, a soft OGP can be used to show that the solutions returned by Lipschitz algorithms are confined to an exceptional set.  This is the idea behind the proof of  Theorem~\ref{thm:main2}; see Section~\ref{sec:local}.   

We finally mention that a statement similar to Theorem~\ref{thm:soft_ogp0} was shown to hold for the symmetric binary perceptron model with $\ubq = 1 - 1/N$--meaning $\bsigma' \neq \bsigma$--and with a weaker bound on the right-hand side, proving that most solutions are isolated up to linear distance~\cite{perkins2021frozen,abbe2022proof}. This was used in~\cite{alaoui2024hardness} to show that the uniform measure on the set of solutions exhibits transport disorder chaos which as mentioned earlier implies failure of stable sampling algorithms. Furthermore it was shown in~\cite{abbe2022binary} that a certain multiscale majority vote algorithm is able to find solutions all belonging to a connected cluster of diameter linear in $N$ at a sufficiently small constraint-to-variable density, showing that this algorithm finds rare solutions in this low density regime. A version of Theorem~\ref{thm:main2} can be proved for the symmetric binary perceptron via the same argument where (given current technology) the size of the exceptional set $E(\bG)$ and the failure probability of Eq.~\eqref{eq:failure} are both $o_N(1)$ instead of exponentially small. This implies that this rareness property is shared by all Lipschitz (more generally, overlap-concentrated) algorithms at sufficiently small density.

\section{Preliminaries}
\label{sec:prelim}

The proof of Theorems  \ref{thm:main1} and \ref{thm:soft_ogp} will rely on studying the local landscape around a typical point from the Gibbs measure. This is easiest done in a planted model and then transferred by contiguity to the original model.

In the following we describe a joint distribution $\nu_{\pl}$ of a triplet $(\bG,\bG_\tau, \bsigma)$ which we refer to as the \emph{planted model}. Let $\bsigma$ be drawn uniformly from $\{-1,+1\}^N$ and we construct the collection of disorder random variables $\bG$ as follows for all $k \ge 2$, $1 \le i_1,\cdots,i_k \le N$,
\begin{equation}\label{eq:dis}
g_{i_1,\cdots,i_k} = \frac{\beta \gamma_k}{N^{(k-1)/2}} \sigma_{i_1}\cdots \sigma_{i_k} + \tilde{g}_{i_1,\cdots,i_k}\,, 
\end{equation}
where $\tilde{g}_{i_1,\cdots,i_k} \sim N(0,1)$ independently of $\bsigma$ and of each other. Next for $\tau \in [0,1]$ we let 
\begin{equation}\label{eq:interp}
\bG_\tau = (1-\tau) \bG + \sqrt{2\tau-\tau^2}\bG' \,,
\end{equation}
where $\bG'$ is an independent collection of i.i.d.\ $N(0,1)$ r.v.'s. 

On the other hand we let $\nu_{\rd}$ (the \emph{null} or \emph{random} model) be the joint law of $(\bG,\bG_\tau,\bsigma)$ where $g_{i_1,\cdots,i_k} \sim N(0,1)$ i.i.d., $\bG_\tau$ is defined as above conditionally on $\bG$, and $\bsigma \sim \mu_{\beta,\bG}$. Let $\Omega$ be the common sample space on which $\nu_{\rd}$ and $\nu_{\pl}$ are defined. A standard fact is that $\nu_{\pl}$ and $\nu_{\pl}$ are ``equivalent" or \emph{contiguous} at the exponential scale for all $\beta<\beta_c$: 

\begin{lemma} \label{lem:contig0}
Let $\beta < \beta_c$, let $(E_N)$ sequence of events defined on $\Omega$. If there exists $c>0, N_0\ge 1$ such that $\nu_{\pl}(E_N) \le e^{-cN}$ for all $N \ge N_0$, then  there exists $c'>0, N_0'\ge 1$ such that $\nu_{\rd}(E_N) \le e^{-c'N}$ for all $N \ge N_0'$.
\end{lemma}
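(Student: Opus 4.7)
My plan is to identify the Radon-Nikodym derivative $\mathrm{d}\nu_{\pl}/\mathrm{d}\nu_{\rd}$ explicitly and then use Gaussian concentration of the free energy to show it is bounded below by $e^{-\delta N}$ on an event of $\nu_{\rd}$-probability at least $1-e^{-c(\delta)N}$; a one-line change-of-measure then transfers the exponential bound from $\nu_{\pl}(E_N)$ to $\nu_{\rd}(E_N)$. The hypothesis $\beta<\beta_c$ will enter precisely to ensure that the quenched free energy matches the annealed value $\log 2 + \beta^2\xi(1)/2$.

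The first step is to compute the likelihood ratio. Because the conditional law of $\bG_\tau$ given $\bG$ is the same Gaussian interpolation~\eqref{eq:interp} in both models, it suffices to work with the marginal on $(\bG,\bsigma)$. Writing out the Gaussian densities and using $\phi(y-m)/\phi(y)=e^{ym-m^2/2}$, the sum of the linear terms in the exponent will telescope to $\beta H_N(\bsigma)$ (canceling the Boltzmann factor in $\mu_{\beta,\bG}$) and the quadratic correction will equal $\sum_k \beta^2\gamma_k^2 N^k/(2N^{k-1}) = N\beta^2\xi(1)/2$, leaving
\begin{align}
\frac{\mathrm{d}\nu_{\pl}}{\mathrm{d}\nu_{\rd}}(\bG,\bG_\tau,\bsigma) = 2^{-N} Z_N(\beta)\, e^{-N\beta^2\xi(1)/2}\,,
\end{align}
a quantity depending on the sample only through the partition function $Z_N(\beta)$.

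Next I would establish concentration of $\log Z_N(\beta)$ around $N(\log 2 + \beta^2\xi(1)/2)$. Noting that $\nabla_{\bG}\log Z_N(\beta) = \beta\langle\nabla_\bG H_N\rangle_{\mu_{\beta,\bG}}$ and that $\|\nabla_\bG H_N(\bsigma)\|^2 = \sum_k \gamma_k^2 N^k/N^{k-1} = N\xi(1)$ on every configuration, one checks by Jensen that $\bG \mapsto \log Z_N(\beta)$ is $\beta\sqrt{N\xi(1)}$-Lipschitz. Gaussian concentration for Lipschitz functionals then yields $\P(\log Z_N(\beta) \le \E\log Z_N(\beta) - \delta N) \le e^{-c(\delta)N}$ for any $\delta>0$, which combined with the defining property of $\beta_c$ (i.e., $\lim_N N^{-1}\E\log Z_N(\beta) = \log 2 + \beta^2\xi(1)/2$ for $\beta<\beta_c$) produces an event $A_N$ on which $\mathrm{d}\nu_{\pl}/\mathrm{d}\nu_{\rd} \ge e^{-2\delta N}$, with $\nu_{\rd}(A_N^c) \le e^{-c(\delta)N}$.

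The conclusion will then follow from the decomposition
\begin{align}
\nu_{\rd}(E_N) \le e^{2\delta N} \nu_{\pl}(E_N \cap A_N) + \nu_{\rd}(A_N^c) \le e^{-(c-2\delta)N} + e^{-c(\delta)N}\,,
\end{align}
after choosing $0<\delta<c/2$. I do not anticipate any real obstacle here: the argument is a textbook contiguity-at-the-exponential-scale computation, forced by the Nishimori structure of the planted model. The only non-algebraic input is Gaussian concentration of $\log Z_N$, which is a direct application of Borell--TIS to a Lipschitz functional of the disorder.
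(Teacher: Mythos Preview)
Your proposal is correct and follows essentially the same approach as the paper: both identify the likelihood ratio as $2^{-N}Z_N(\beta)e^{-N\beta^2\xi(1)/2}$, invoke Gaussian concentration of $\log Z_N$ (Lipschitz constant $\beta\sqrt{N\xi(1)}$) together with $\beta<\beta_c$ to center it, and finish with the same change-of-measure splitting. Your write-up of the final inequality is in fact cleaner than the paper's, which contains a couple of typos ($\nu_{\rd}$ written where $\nu_{\pl}$ is meant).
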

The proof appears in~\cite[Lemma 3.5]{alaoui2023shattering} and goes as follows:  An application of the Bayes rule yields a formula for the likelihood ratio of $\nu_{\pl}$ on $\nu_{\rd}$:
\begin{equation}\label{eq:lr} 
\frac{\rmd \nu_{\pl}}{\rmd \nu_{\rd}} (\bG,\bG_\tau,\bsigma) = \frac{1}{2^N} \sum_{\bsigma' \in \{-1,+1\}^N} e^{\beta H_N(\bsigma') - \beta^2 N \xi(1)/2} \,,
\end{equation}
where the right-hand side does not depend on $\bG_\tau$ nor $\bsigma$, and $H_N$ is the Hamiltonian with disorder coefficients given by $\bG$. 
We further have the concentration bound
\begin{equation} \label{eq:concent}
\P_{\nu_{\rd}}\Big( \frac{1}{N}\Big|\log \frac{\rmd \nu_{\pl}}{\rmd \nu_{\rd}}\Big| \ge t \Big) \le e^{-\beta^2\xi(1)(t-o_N(1))^2N/2} \,,
\end{equation}
for all $t\ge0$, $\beta \le \beta_c$ and all $N$ sufficiently large, by Gaussian concentration of Lipschitz functions~\cite[Theorem 5.6]{boucheron2013concentration}: this is since $\log \big(\rmd \nu_{\pl}/\rmd \nu_{\rd}\big)$ is a Lipschitz function of $\bG$ with Lipschitz constant $\beta\sqrt{\xi(1)N}$, and $\frac{1}{N} \E_{\nu_{\rd}}\log \big(\rmd \nu_{\pl}/\rmd \nu_{\rd}\big) \to 0$ and all $\beta < \beta_c$. Now for a sequence of events $(E_N)$ we have
\begin{align*}
\nu_{\rd}(E_N)  &= \E_{\nu_{\pl}}\Big[ \frac{\rmd \nu_{\rd}}{\rmd \nu_{\pl}} \one_{E_N}\Big] \\
&\le e^{N t} \nu_{\rd}(E_N) + \P_{\nu_{\rd}}\Big(\frac{\rmd \nu_{\pl}}{\rmd \nu_{\rd}} \le e^{-Nt}\Big) \,.
\end{align*}
If $\nu_{\rd}(E_N) \le e^{-cN}$ then we conclude by letting $t=c/2$ and using the bound Eq.~\eqref{eq:concent}.

A first application of this Lemma is to prove that energies near $\beta \xi(1)N$ are exponentially likely under $\mu_{\beta,\bG}$:  

\begin{lemma}\label{lem:typical}
For all $\beta<\beta_c$ and $\eps>0$, 
\[\E \mu_{\beta,\bG} \Big(\Big\{\bsigma :   \Big|\frac{H_N(\bsigma)}{N\xi(1)\beta} -1\Big| \le \eps \Big\}\Big) \ge 1-e^{-cN}\,,\] 
where $c = c(\beta,\eps)>0$. In particular $\E \mu_{\beta,\bG}(S_{(1-\eps)\beta}(\bG)) \ge 1-e^{-cN}$.
\end{lemma}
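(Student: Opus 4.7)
The plan is to go through the planted model and apply the contiguity lemma (Lemma~\ref{lem:contig0}). Let $E_N = \{(\bG,\bG_\tau,\bsigma) \in \Omega : |H_N(\bsigma)/(N\xi(1)\beta) - 1| > \eps\}$. Since under the null model $\nu_{\rd}$ the marginal law of $\bsigma$ given $\bG$ is $\mu_{\beta,\bG}$ and $\bG$ itself is i.i.d.\ standard Gaussian, $\nu_{\rd}(E_N)$ coincides with the expected Gibbs mass of the complement of the set appearing in the lemma. So it suffices to show $\nu_{\rd}(E_N) \le e^{-cN}$, and by Lemma~\ref{lem:contig0} (which requires $\beta<\beta_c$) it is in fact enough to prove the corresponding bound under the planted model $\nu_{\pl}$.

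The planted-model calculation is a direct consequence of the change of measure~\eqref{eq:dis}. Conditionally on $\bsigma$, substituting $g_{i_1,\cdots,i_k} = (\beta\gamma_k/N^{(k-1)/2})\sigma_{i_1}\cdots\sigma_{i_k} + \tilde{g}_{i_1,\cdots,i_k}$ into the Hamiltonian~\eqref{eq:hamiltonian} and using $\sigma_i^2 = 1$ yields
\begin{equation*}
H_N(\bsigma) \;=\; \beta N \sum_{k=2}^P \gamma_k^2 \;+\; \widetilde H_N(\bsigma) \;=\; \beta\xi(1) N + \widetilde H_N(\bsigma),
\end{equation*}
where $\widetilde H_N(\bsigma)$ is the Gaussian Hamiltonian built from the independent collection $(\tilde g_{i_1,\cdots,i_k})$. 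By the standard variance computation $\E[\widetilde H_N(\bsigma)^2\,|\,\bsigma] = N\xi(1)$, so conditionally on $\bsigma$ the random variable $\widetilde H_N(\bsigma)$ is centered Gaussian with variance $N\xi(1)$.

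A one-sided Gaussian tail bound then gives
\begin{equation*}
\nu_{\pl}(E_N) \;=\; \P\bigl(|\widetilde H_N(\bsigma)| \ge \eps\beta\xi(1) N\bigr) \;\le\; 2 \exp\!\Bigl(-\tfrac{\eps^2 \beta^2 \xi(1)}{2}\,N\Bigr),
\end{equation*}
uniformly in $\bsigma$ (hence also after averaging over $\bsigma$), which is of the form $e^{-cN}$ with $c = c(\beta,\eps)>0$. Contiguity (Lemma~\ref{lem:contig0}) transfers this to $\nu_{\rd}(E_N) \le e^{-c'N}$, which is exactly the desired inequality after rewriting $\nu_{\rd}(E_N)$ as an expected Gibbs mass. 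The second assertion is immediate: on the complement of $E_N$ one has $H_N(\bsigma) \ge (1-\eps)\beta\xi(1) N$, so the set appearing in the lemma is contained in $S_{(1-\eps)\beta}(\bG)$.

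There is no real obstacle here; the only thing to keep track of is that the Gaussian concentration bound under $\nu_{\pl}$ is exponential in $N$ with an explicit rate, so that the contiguity lemma (which only preserves exponential smallness) can be applied and the resulting constant $c'$ remains strictly positive for every $\beta<\beta_c$ and every $\eps>0$.
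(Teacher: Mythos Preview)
Your proof is correct and follows essentially the same approach as the paper: compute $\nu_{\pl}(E_N)$ by observing that under the planted model $H_N(\bsigma)=\beta\xi(1)N+\widetilde H_N(\bsigma)$ with $\widetilde H_N(\bsigma)\sim N(0,N\xi(1))$, apply a Gaussian tail bound, and transfer to $\nu_{\rd}$ via Lemma~\ref{lem:contig0}. Your write-up is in fact slightly more careful than the paper's in spelling out why $\nu_{\rd}(E_N)$ equals the expected Gibbs mass in question.
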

\begin{proof}
Under $\nu_{\pl}$ we have for any $\bsigma'$, $H_N(\bsigma') = \beta N \xi(\langle \bsigma,\bsigma'\rangle/N) + \tilde{H}_N(\bsigma')$, where $\tilde{H}_N$ is the random Hamiltonian constructed with disorder coefficients $(\tilde{g}_{i_1,\cdots,i_k})$, Eq.~\eqref{eq:dis}. Taking $\bsigma' = \bsigma$ we have
\[\E_{\nu_{\pl}} \mu_{\beta,\bG} \Big(\Big\{\bsigma :   \Big|\frac{H_N(\bsigma)}{N\xi(1)\beta} -1\Big| > \eps \Big\}\Big) 
= \P\big( |\tilde{H}_N(\bsigma)| > N\beta \xi(1)\eps\big)\,.\]
Since $\tilde{H}_N(\bsigma)\sim N(0,\sqrt{N\xi(1)})$ the above is upper-bounded by $2e^{- N \beta^2 \xi(1)\eps^2/2}$.  
We conclude by appealing to Lemma~\ref{lem:contig0}.
\end{proof}

\section{Local landscape around a typical point}
\label{sec:local}
This section is dedicated to the proof of a more general version of Theorem~\ref{thm:soft_ogp0}. 
 Recall that $\bG_\tau = (1-\tau) \bG + \sqrt{2\tau-\tau^2}\bG'$ for $\tau \in [0,1]$ where $\bG'$ is a collection of i.i.d.\ $N(0,1)$ r.v.'s. 
\begin{theorem}\label{thm:soft_ogp}
If $\beta \in (\bar{\beta}_{d}, \beta_c)$, there exists $\eps>0$, $c>0$ and $0\le \lbq<\ubq \le 1$ depending on $\beta$ and $\xi$ such that for all $\tau \in [0,1]$, 
  \begin{equation}\label{eq:soft_ogp}
     \E \mu_{\beta,\bG}\Big(\Big\{\bsigma : \exists\, \bsigma' \in S_{(1-\eps)\beta}(\bG_\tau),\,\,\textup{s.t.}\,\,  \lbq \le \langle \bsigma,\bsigma'\rangle/N  \le \ubq \Big\}\Big) \le e^{-cN}\,.
\end{equation}
In the case $\tau = 1$, if $\beta \in (\bar{\beta}_{d}, \beta_c)$ then for any $\bsigma \in \{-1,+1\}^N$,
\begin{equation}\label{eq:tauequal1}
\P\Big( \exists\, \bsigma' \in S_{\beta}(\bG'),\,\,\textup{s.t.}\,\,  \langle \bsigma,\bsigma'\rangle/N  \ge \lbq \Big) \le e^{-cN}\,.
\end{equation} 

Finally in the case of the pure $p$-spin $\xi(x)=x^p$, for $\eps'>0$, if $\beta \in \big((1+\eps')\sqrt{(2\log p)/p},\\ \sqrt{2\log 2} \big)$, there exists $p_0$, $\delta>0$ and $c'>0$ all depending only on $\eps'$ such that for all $p \ge p_0$, the bounds~\eqref{eq:soft_ogp} and~\eqref{eq:tauequal1} hold with $\lbq = 1 - \delta/p$, $\ubq = 1- 1/p^{1+\delta}$ and $\eps = c'\sqrt{(\log p)/ p}$. 
\end{theorem}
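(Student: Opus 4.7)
The strategy is to transfer the statement to the planted model $\nu_{\pl}$ by Lemma~\ref{lem:contig0} and to control the relevant probability using a Sudakov--Fernique Gaussian comparison followed by the Borell--TIS concentration inequality. Under $\nu_{\pl}$, conditioning on the planted $\bsigma$ and combining the representation~\eqref{eq:dis} with the definition~\eqref{eq:interp} of $\bG_\tau$, one obtains the decomposition
\[
H_{N,\tau}(\bsigma') \;=\; (1-\tau)\beta N \xi(q) + \tilde{H}^\tau(\bsigma'), \qquad q := \langle \bsigma, \bsigma'\rangle/N,
\]
where $\tilde{H}^\tau$ is a centered Gaussian Hamiltonian with covariance $\E[\tilde{H}^\tau(\bsigma'_1)\tilde{H}^\tau(\bsigma'_2)] = N\xi(\langle\bsigma'_1,\bsigma'_2\rangle/N)$, independent of $\bsigma$. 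The event in~\eqref{eq:soft_ogp} then reduces to bounding $\P\bigl(\max_{\bsigma' \in A_q}\tilde{H}^\tau(\bsigma') \geq \beta N[(1-\eps)\xi(1) - (1-\tau)\xi(q)]\bigr)$ for each $q \in [\lbq,\ubq]$, with $A_q := \{\bsigma' : \langle\bsigma,\bsigma'\rangle/N = q\}$, and then union-bounding over the $O(N)$ admissible values of $q$.

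For the expected maximum, the plan is to compare $\tilde{H}^\tau$ on $A_q$ with the linear Gaussian process $Y(\bsigma') := \sqrt{\xi'(1)}\langle \bg,\bsigma'\rangle$, $\bg \sim N(0,I_N)$. Convexity of $\xi$ on $[0,1]$ (since $\gamma_k^2 \geq 0$) gives $\xi(1)-\xi(r)\leq \xi'(1)(1-r)$, so the increment variances of $\tilde{H}^\tau$ are dominated by those of $Y$ as long as the overlaps $\langle\bsigma'_1,\bsigma'_2\rangle/N \in [2q-1,1]$ stay in $[0,1]$, which holds for $q \geq 1/2$. Sudakov--Fernique then yields $\E\max_{A_q}\tilde{H}^\tau \leq \sqrt{\xi'(1)}\E\max_{A_q}\langle \bg,\bsigma'\rangle$. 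Writing $h_i := g_i\sigma_i \sim N(0,1)$ i.i.d., the inner maximum is the rearrangement optimum $2\sum_{i \in S^\star} h_i - \sum_i h_i$, with $S^\star$ indexing the $(1+q)N/2$ largest $h_i$. Gaussian order-statistics asymptotics combined with the identity $\int_t^\infty s\varphi(s)\,\rmd s = \varphi(t)$ give
\[
\E\bigl[\max_{A_q}\langle \bg,\bsigma'\rangle\bigr] \;=\; 2N\varphi\bigl(\Phi^{-1}((1+q)/2)\bigr) + o(N).
\]
Borell--TIS with $\sigma_T^2 = N\xi(1)$ then yields $\P(\max \geq \E\max + N\Delta) \leq \exp(-N\Delta^2/(2\xi(1)))$, exponentially small as soon as
\[
\beta\bigl[(1-\eps)\xi(1) - (1-\tau)\xi(q)\bigr] \;>\; 2\sqrt{\xi'(1)}\,\varphi\bigl(\Phi^{-1}((1+q)/2)\bigr).
\]
At $\tau = \eps = 0$ this is precisely $\beta > \bar\beta_d$ evaluated pointwise in $q$; since $\beta > \bar\beta_d$ strictly, continuity provides a nontrivial window $[\lbq,\ubq]$ around a near-minimizer and a slack $\eps > 0$ on which the strict inequality persists uniformly in $q$ and $\tau \in [0,1]$ (the worst case being $\tau = 0$). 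Transferring via Lemma~\ref{lem:contig0} completes~\eqref{eq:soft_ogp}. The $\tau = 1$ case~\eqref{eq:tauequal1} is simpler: $\bG_1 = \bG'$ is independent of $\bG$, so no planting is needed and the same Sudakov--Fernique/Borell--TIS bound applied directly to $H_N(\cdot;\bG')$ on $\bigcup_{q\geq \lbq} A_q$ suffices, using $\beta\xi(1) > \beta(\xi(1)-\xi(\lbq)) > 2\sqrt{\xi'(1)}\varphi(\Phi^{-1}((1+\lbq)/2))$ for the same $\lbq$.

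The pure $p$-spin asymptotics amount to evaluating $\bar\beta_d$ at $q = 1 - c/p$ for small $c > 0$. Using $(1-c/p)^p = e^{-c}(1+o_p(1))$ and the Mills-type asymptotic $\varphi(\Phi^{-1}(1-u)) = u\sqrt{2\log(1/u)}(1+o(1))$ as $u \to 0^+$, the ratio in the definition of $\bar\beta_d$ becomes $\frac{c\sqrt{(2\log p)/p}}{1-e^{-c}}(1+o_p(1))$, whose infimum over $c > 0$ equals $\sqrt{(2\log p)/p}(1+o_p(1))$, attained as $c \to 0^+$. Tracking error terms uniformly, the exponent in the Sudakov--Fernique/Borell--TIS bound remains negative over the window $\lbq = 1 - \delta/p$, $\ubq = 1 - 1/p^{1+\delta}$ with slack $\eps = c'\sqrt{(\log p)/p}$. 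The main obstacle I anticipate is making the order-statistics asymptotic $\E\max_{A_q}\langle \bg,\bsigma'\rangle = 2N\varphi(\Phi^{-1}((1+q)/2))(1+o(1))$ quantitative with a rate uniform in $q$ across the entire window, particularly near $\ubq \to 1$ where $|A_q|$ is small and finite-$N$ fluctuations of the top $(1+q)N/2$ order statistics are most delicate; a related subtlety is verifying that the overlaps $\langle\bsigma'_1,\bsigma'_2\rangle/N$ realized within the union of the $A_q$'s stay safely in the range where the Sudakov--Fernique comparison inequality is valid.
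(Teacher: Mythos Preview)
Your proposal follows the paper's proof almost exactly: transfer to the planted model, decompose $H_{N,\tau}$ into the mean $(1-\tau)\beta N\xi(q)$ plus a centered Gaussian Hamiltonian, bound the expected constrained maximum by Sudakov--Fernique against the linear process $\sqrt{\xi'(1)}\langle \bg,\bsigma'\rangle$, apply Gaussian concentration, and union-bound over the $O(N)$ admissible $q$. The one substantive difference is how you evaluate $\E\max_{A_q}\langle \bg,\bsigma'\rangle$. You propose order-statistics asymptotics, obtaining $2N\varphi\bigl(\Phi^{-1}((1+q)/2)\bigr) + o(N)$ and then worry---rightly, for that route---about uniformity of the $o(N)$ term as $q$ ranges over the window, especially near $\ubq\to 1$. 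The paper instead uses Lagrangian relaxation:
\[
\E\max_{\langle\one,\bsigma'\rangle = Nq}\langle \bg,\bsigma'\rangle \;\le\; \inf_{h\in\R}\,\E\Bigl[\sum_{i=1}^N |g_i + h| - Nhq\Bigr] \;=\; N\inf_{h}\bigl\{\E|g_1+h| - hq\bigr\} \;=\; 2N\,\varphi\Bigl(\Phi^{-1}\Bigl(\tfrac{1+q}{2}\Bigr)\Bigr),
\]
a non-asymptotic upper bound valid for every $N$ and every $q$, which makes your anticipated obstacle vanish and keeps the pure $p$-spin analysis clean. On your second worry: the increment inequality $\xi(1)-\xi(x)\le \xi'(1)(1-x)$ actually holds for all $x\in[-1,1]$ whenever $\xi(x)=\sum_{k\ge 2}\gamma_k^2 x^k$ with $\gamma_k^2\ge 0$ (verify $x^k - 1 - k(x-1)\ge 0$ on $[-1,1]$ termwise: for even $k$ by convexity, for odd $k\ge 3$ by concavity on $[-1,0]$ together with $f(-1)=2(k-1)>0$ and $f(0)=k-1>0$), so the Sudakov--Fernique comparison needs no restriction to $q\ge 1/2$.
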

 \begin{proof}
 We proceed by establishing the above result under the planted distribution and then transfer it to the null distribution using contiguity at exponential scale. Fix $q \in \Z_{+}/N$, $\eps>0$ and $\tau\in[0,1]$. Let us also make the dependence of $H_N$ on the disorder explicit by writing $H_{N}(\,\cdot\,; \bG)$ and consider the event  
 \begin{equation}\label{eq:event}
\hspace{-0.1cm}  (\bG,\bG_\tau,\bsigma) : 
\max \Big\{ \frac{1}{N} H_N(\bsigma'; \bG_\tau) : \bsigma' \in \{-1,+1\}^N,  \langle \bsigma,\bsigma'\rangle = Nq \Big\} \ge (1-\eps)\beta \xi(1)\,.
 \end{equation}
 Under $(\bG,\bG_\tau,\bsigma) \sim \nu_{\pl}$ we have $(H_N(\bsigma'; \bG_{\tau}))_{\bsigma'} \stackrel{\rmd}{=} \big(\beta (1-\tau)N \xi(\langle \bsigma,\bsigma'\rangle/N) + \tilde{H}_N(\bsigma')\big)_{\bsigma'}$ where $\tilde{H}_N = H_N(\,\cdot\,;\tilde{\bG})$ is the random Hamiltonian constructed with i.i.d.\ $N(0,1)$ disorder coefficients $(\tilde{g}_{i_1,\cdots,i_p})_{1 \le i_k \le N, k\ge 2}$. In the case $\tau=1$ we need to bound the maximum of $\tilde{H}_N(\bsigma')$  under the constraint $\langle \bsigma,\bsigma'\rangle = Nq$. If $\tau <1$,
we observe that due the constraint $\langle \bsigma,\bsigma'\rangle = Nq \ge 0$ we have $H_N(\bsigma'; \bG_{\tau}) \le H_N(\bsigma'; \bG)$, and it suffices to upper bound the probability of the event displayed in Eq.~\eqref{eq:event} for $\tau=0$. We only consider the latter case, as the former can be treated similarly. We call this event $A_q = A(\bG,\bsigma,q)$. We have 
 \begin{equation}\label{eq:firstbd}
 \E_{\nu_{\pl}} \mu_{\beta,\bG} (A_q) = \P\Big(   \beta  N\xi(q) + \max_{\bsigma' :  \langle \bsigma,\bsigma'\rangle = Nq }  \tilde{H}_N(\bsigma')  \ge (1-\eps)\beta \xi(1) N \Big) \,.
 \end{equation}  
By sign-invariance of the Gaussian distribution we may assume that $\bsigma = \one$. First, since the Hamiltonian $\tilde{H}(\bsigma')$ is $\sqrt{N\xi(1)}$-Lipschitz in the disorder coefficients $(\tilde{g}_{i_1,\cdots,i_p})$ we have
$\P(X_N \ge \E X_N  + t) \le e^{-t^2/(2N\xi(1))}$ for all $t \ge 0$ where $X_N := \max_{\bsigma' :  \langle \one,\bsigma'\rangle = Nq } \tilde{H}_N(\bsigma')$.
We now bound the expectation of $X_N$. The collection $(\tilde{H}_N(\bsigma'))_{\bsigma' \in \{-1,+1\}^N}$ is a centered Gaussian process whose increment variances can bounded as 
 \begin{align}
  \E\big[(\tilde{H}_N(\bsigma')- \tilde{H}_N(\bsigma''))^2\big] &= 2N \big(\xi(1) - \xi( \langle \bsigma',\bsigma''\rangle/N)\big) \nonumber\\
  &\le 2N \xi'(1) \big(1 -  \langle \bsigma',\bsigma''\rangle/N\big)\nonumber\\
  &= \xi'(1)   \E\big[( \langle \bg , \bsigma'\rangle - \langle \bg , \bsigma''\rangle)^2\big]\,,\label{eq:increments}
   \end{align}  
where $\bg \in \R^N$ is a vector of i.i.d.\ $N(0,1)$ r.v.'s.  
It follows by the Sudakov--Fernique inequality~\cite[Theorem 7.2.11]{vershynin2018high} that 
 \begin{align}
 \E \max_{\bsigma' :  \langle \one,\bsigma'\rangle = Nq }  \tilde{H}_N(\bsigma')  &\le \sqrt{\xi'(1)} \E \max_{\bsigma' :  \langle \one,\bsigma'\rangle = Nq }  \langle \bg , \bsigma'\rangle \label{eq:sudakov_fermique}\\
&\le \sqrt{\xi'(1)}  \inf_{h \in \R}  \E \Big[\max_{\bsigma' \in \{-1,+1\}^N}  \langle \bg , \bsigma'\rangle + h  (\langle \one , \bsigma'\rangle- Nq)\Big] \nonumber\\
&=\sqrt{\xi'(1)}  \inf_{h \in \R}  \E \Big[ \sum_{i=1}^N |g_i + h| - Nhq \Big] \nonumber\\
&= \sqrt{\xi'(1)} N  \inf_{h \in \R}  \big\{\E |g_1 + h| - hq \big\}\,.\nonumber
\end{align}
Since 
\[u(h) :=\E |g_1 + h| = \int_{-h}^{+\infty} (z+h)\varphi(z) \rmd z - \int_{-\infty}^{-h} (z+h)\varphi(z) \rmd z = h (2\Phi(h)-1) + 2\varphi(h)\,,\] 
the above infimum is achieved at a point such that $u'(h) = 2\Phi(h)-1 = q$, i.e., $h = \Phi^{-1}((1+q)/2)$, and we obtain the bound 
\begin{equation} \label{eq:sf}
\E \max_{\bsigma' :  \langle \one,\bsigma'\rangle = Nq } \tilde{H}_N(\bsigma') \le 2 \sqrt{\xi'(1)} N \varphi\big(\Phi^{-1}\big(\frac{1+q}{2}\big)\big)\,.
\end{equation}
Therefore recalling the expression of $\bar{\beta}_{d}$ form Eq.~\eqref{eq:upperbetadyn}, if $\beta > \bar{\beta}_d$, there exists an interval $[\lbq,\ubq] \subset [0,1]$ depending on $\beta,\xi$ such that
\begin{equation}\label{eq:condnum}
2\sqrt{\xi'(1)}\varphi\big(\Phi^{-1}\big(\frac{1+q}{2}\big)\big) < \beta(\xi(1)-\xi(q)) \, ,~~\forall q \in [\lbq,\ubq]\,,
\end{equation}
by continuity of the functions appearing in Eq.~\eqref{eq:condnum}. 
Furthermore by continuity we can find $\eps=\eps(\beta,\xi)>0$ such that 
\begin{equation}\label{eq:condnum2}
2\sqrt{\xi'(1)}\varphi\big(\Phi^{-1}\big(\frac{1+q}{2}\big)\big) < (1-\eps)\beta\xi(1) - \beta \xi(q)\, ,~~\forall q \in [\lbq,\ubq]\,.
\end{equation}
By a union bound over $q \in (\lbq,\ubq) \cap \Z_+/N$ (there are at most $N$ points in this set) we obtain from~\eqref{eq:firstbd}  
\[ \E_{\nu_{\pl}} \mu_{\beta,\bG} \Big(\bigcup_{q \in (\lbq,\ubq) \cap \Z_+/N} A_q\Big)  \le Ne^{-c N} \, ,~~~ c =c(\beta,\xi)>0\,.\]
We are able to conclude the argument for a general mixture $\xi$ by invoking contiguity at exponential scale between $\nu_{\pl}$ and $\nu_{\rd}$, Lemma~\ref{lem:contig0}. 

For the pure $p$-spin case we are able to obtain a better quantitative control on the parameters $\lbq, \ubq$ from Eq.~\eqref{eq:condnum}. For $q \in (0,1)$ we let $h = \Phi^{-1}((1+q)/2)$. By the standard bound
\[1-\Phi(h) \ge \varphi(h) \big(\frac{1}{h} - \frac{1}{h^3}\big)\,,\] 
valid for all $h>0$ (see, e.g.,~\cite[Proposition 2.1.2]{vershynin2018high}) we have
\[\varphi\big(\Phi^{-1}\big(\frac{1+q}{2}\big)\big) = \varphi(h) \le \frac{(1-\Phi(h))h}{1-h^{-2}} = \frac{1-q}{2} \frac{h}{1-h^{-2}}\, .\]
We now let $q = 1 - \lambda/p$. Since we have $1-q^p \ge 1-e^{-\lambda}$ for all $\lambda\in [0,p]$, a sufficient condition for  Eq.~\eqref{eq:condnum} to hold for this value of $q$ is
\[ \frac{1}{\sqrt{p}}   \frac{\lambda h}{1-h^{-2}} \le \beta (1-e^{-\lambda})\,.\]
Now we control the magnitude of $h$. We have $\lambda/(2p) = (1-q)/2 = 1-\Phi(h) \le \varphi(h)$. Solving for $h$ we obtain 
\[h \le \sqrt{2 \log( c_0 p/\lambda)} \le \sqrt{2 \log(p/\lambda)}\,,\] 
where $c_0=\sqrt{2/\pi}<1$.
Moreover, since $h = \Phi^{-1}(1-\lambda/(2p)) \to \infty $ as $p \to \infty$ for any $\lambda = o(p)$, we have $ 1/(1-h^{-2}) = 1+o_p(1)$.
Putting these bounds together it suffices to have
\begin{equation}\label{eq:condnum3}
(1+o_p(1)) \frac{\lambda}{1-e^{-\lambda}}  \sqrt{\frac{2 \log(p/\lambda)}{p}} < \beta \, ,
\end{equation}
for some $\lambda = o(p)$. Now let $\beta \ge (1+\eps')\sqrt{(2\log p)/p}$ with $\eps'>0$.  Since the function $v(\lambda) := \lambda/(1-e^{-\lambda})$ is increasing with $v(0^+) = 1$ and $v(+\infty)=+\infty$, for $\lambda \in [p^{-\delta},\delta]$ for some $\delta>0$ the left-hand side in the above display is smaller than      
\[(1+o_p(1))v(\delta) \sqrt{\frac{2 (1+\delta) \log p}{p}}\,,\]
so Eq.~\eqref{eq:condnum3} is satisfied for any $\delta$ such that $v(\delta)\sqrt{1+\delta} < 1+\eps'$ and all $p \ge p_0(\eps')$.  
Moreover since the inequality is strict there further exists $\eps = \eps(\eps',p)>0$ such that  Eq.~\eqref{eq:condnum2} is satisfied with $\lbq = 1-\delta/p$ and $\ubq = 1-1/p^{1+\delta}$: any $\eps$ such that \[\eps \sqrt{2\log 2}  < [(1+\eps')(1-e^{-\lambda}) - (1+o_p(1))\lambda \sqrt{1+\delta}] \sqrt{(2 \log p) / p}\] 
will do.
This concludes the proof the theorem, and also establishes the bound    
\begin{equation}\label{eq:beta_d0} 
\bar{\beta}_d \sqrt{p} \le (1+o_p(1)) v(\lambda)  \sqrt{2 \log(p/\lambda)} \,,
\end{equation}
valid for any $\lambda = o(p)$, where $\bar{\beta}_d$ is defined in Eq.~\eqref{eq:upperbetadyn}.
\end{proof}

\section{Proof of Theorem~\ref{thm:main1}: Shattering}
\label{sec:shattering}
In this section we construct a shattering decomposition for the Gibbs measure and prove Theorem~\ref{thm:main1}. This construction is similar but relatively simpler than the one used for the spherical case in~\cite{alaoui2023shattering}: it uses the soft OGP Theorem~\ref{thm:soft_ogp} as input instead of a putative monotonicity of the Franz--Parisi potential.  

Fix $\eps'>0$ small, and let $p_0=p_0(\eps')$, $p \ge p_0(\eps')$, $\delta = \delta(\eps')>0$, $c' = c'(\eps')>0$, $\eps = c' \sqrt{(\log p)/p}$ and $\lbq = 1- \delta/p$, $\ubq = 1-1/p^{1+\delta}$ be the parameters appearing in Theorem~\ref{thm:soft_ogp} for the pure $p$-spin model, and let  
\begin{equation}\label{eq:betabounds}
(1+\eps')\sqrt{(2\log p)/p} \le \beta \le (1-\eps')\sqrt{2\log 2} < \beta_c\,.
\end{equation}
We further define the inner and outer radii 
\begin{equation}\label{eq:radii}
r = \frac{1}{2}(1-\ubq) = \frac{1}{2 p^{1+\delta}}\,,~~~~~~ R = \frac{1}{2}(1-\lbq) = \frac{\delta}{2p}\,,
\end{equation}
and remark that for $p \ge p_1$ for some $p_1 = p_1(\eps') \ge p_0$ we have $r < R/3$.

We now consider the set of ``regular" points 
\begin{equation}\label{eq:sreg}
\Sreg = \Big\{\bsigma \in \{-1,+1\}^N: \Big|\frac{H_N(\bsigma)}{N\beta} -1\Big| \le \eps/2 \,,\,\nexists\, \bsigma' \in S_{(1-\eps/2)\beta}(\bG) ~\mbox{s.t.}~ d_N(\bsigma,\bsigma') \in [r,R]\Big\}\,,
\end{equation}
where $d_N$ is the Hamming distance normalized by $N$. (Recall that $S_{\beta}(\bG) = \{\bsigma :  H_N(\bsigma) \ge \beta N\}$.) The definition says that the points of $\Sreg$ must have near typical energy and have no points of at least near typical energy at distance between $r$ and $R$ from them. 

It is clear that the points in $\Sreg$ are clustered: if $\bsigma_1 , \bsigma_2 \in \Sreg$ then either $d_{N}(\bsigma_1 , \bsigma_2) < r$ or $d_{N}(\bsigma_1 , \bsigma_2) > R$.
Now consider the `clusters' 
\begin{equation}
C(\bsigma) = \Sreg \cap B_N(\bsigma, r) \, ,~~~~ \mbox{for}~\bsigma \in \Sreg\,,
\end{equation}
where $B_N(\bsigma,r) = \{\bsigma' \in \{-1,+1\}^N : d_N(\bsigma,\bsigma') \le r\}$.

We observe that for $\bsigma_1 , \bsigma_2 \in \Sreg$, either $C(\bsigma_1) = C(\bsigma_2)$ or $d_N(\bsigma_1,\bsigma_2) > R-r$. Indeed, if $d_N(\bsigma_1,\bsigma_2) \le R-r$ then for any $\bsigma \in C(\bsigma_1)$ we have $d_{N}(\bsigma,\bsigma_2) \le d_{N}(\bsigma_1,\bsigma_2)+d_{N}(\bsigma,\bsigma_1) \le (R-r)+r = R$ and since $\bsigma_2 \in \Sreg$, $d_N(\bsigma,\bsigma_2) < r$ so $\bsigma \in C(\bsigma_2)$, hence $C(\bsigma_1) \subseteq C(\bsigma_2)$. By symmetry we have  $C(\bsigma_1) = C(\bsigma_2)$.   

We can now define an equivalence relation on $\Sreg$: $\bsigma_1 \sim \bsigma_2$ if $C(\bsigma_1) = C(\bsigma_2)$, and define the clusters $(\cC_i)_{i=1}^m$ of the shattering decomposition of $\mu_{\beta,\bG}$ as the collection $(C(\bsigma))_{\bsigma \in \cR}$ where $\cR$ collects one representative from each equivalence class of $\sim$. (Then $m = |\cR|$ is the number of equivalence classes.)  
Now we verify the four conditions defining shattering as per Theorem~\ref{thm:main1}:
\begin{enumerate}

\item Small diameter: By definition, each cluster has $d_N$-diameter at most $2r$. Since the points of each cluster belong to $\Sreg$ and $R > 2r$, the $d_N$-diameter is actually at most $r$.       

\item Pairwise separation: From the clustering property established in the previous paragraph we have 
\begin{equation}\label{eq:separation}
d_N(C(\bsigma_1), C(\bsigma_2)) = \min_{\bsigma \in C(\bsigma_1),\bsigma' \in C(\bsigma_2)}d_N(\bsigma,\bsigma') \ge d_N(\bsigma_1,\bsigma_2) - 2r \,,
\end{equation} 
for any two distinct points $\bsigma_1,\bsigma_2 \in \cR$. Since $d_N(\bsigma_1,\bsigma_2) > R > 3r$ it follows that $d_N(C(\bsigma_1), C(\bsigma_2)) > r$ so actually $d_N(C(\bsigma_1), C(\bsigma_2)) > R$. This establishes the separation condition.

 \item Small Gibbs mass: Let $\bsigma \in \cR$. Since $H_N(\bsigma') \le (1+\eps/2)N\beta$ for any $\bsigma' \in \Sreg$ we have  
\[\mu_{\beta,\bG}(C(\bsigma)) = \frac{1}{Z_N(\beta)} \sum_{\bsigma'\in C(\bsigma)} e^{\beta H_{N}(\bsigma')} 
\le  \frac{|C(\bsigma)|}{Z_N(\beta)} e^{(1+\eps/2)N\beta^2} \,. \]
One the one hand, by Stirling's formula, $|C(\bsigma)|\le |B_N(\bsigma,r)| \le e^{N(1+o_N(1))h(2r)}$, where $h(s) = -s \log s - (1-s)\log (1-s)$.
On the other hand, $\frac{1}{N} \E\log Z_N(\beta) = \beta^2/2+\log 2+ o_N(1)$ for all $\beta< \beta_c$.  
On the event $A = \{\frac{1}{N} \log Z_N(\beta) \ge \beta^2/2+\log 2 - t \}$ for $t>0$ to be chosen later we have
 \begin{align*}
\frac{1}{N} \log \mu_{\beta,\bG}(C(\bsigma)) \le (1+o_N(1))h(2r) + (1+\eps/2) \beta^2 - \beta^2/2 - \log 2 + t\, .
 \end{align*}
Since $\beta \le (1-\eps')\sqrt{2\log 2}$, we take $t = [1-(1-\eps')^2(1+\eps)](\log 2)/2>0$ so that the above is bounded by  
 \[ (1+o_N(1))h(2r)  - [1-(1-\eps')^2(1+\eps)](\log 2)/2\, .\] 
 Next we have $\eps^2 = c'^2 (\log p) / p$, and since $2r = 1/p^{1+\delta}$ (see Eq.~\eqref{eq:radii}) we have $h(2r) = O((\log p)/p^{1+\delta})$,  so that $\max_{\bsigma \in \cR}\frac{1}{N} \log \mu_{\beta,\bG}(C(\bsigma)) < -C\eps'$ under the event $A$, for $p$ large enough and where $C>0$ is some absolute constant.   
Finally since $\bG \mapsto \log Z_N$ is a $\beta\sqrt{N}$-Lipschitz function, by Gaussian concentration of Lipschitz functions, $\P(A) \ge 1 - e^{-cN}$, $c = c(\beta)>0$. This yields $\E \max_{\bsigma \in \cR}  \mu_{\beta,\bG}(C(\bsigma)) \le e^{-c'N}$, $c' = c'(\beta,\eps')>0$.      

 \item Collective coverage: Since the collection $(C(\bsigma))_{\bsigma\in \cR}$ forms a partition of $\Sreg$ we have by a union bound
\begin{align*} 
\E \mu_{\beta,\bG}\Big(\bigcup_{\bsigma \in \cR} C(\bsigma) \Big) &= \E\mu_{\beta,\bG}(\Sreg) \\
&\ge 1-  \E\mu_{\beta,\bG}\big(\big\{\bsigma :  H_N(\bsigma) \notin (1\pm\eps/2) \beta N\big\}\big) \\
&~~~~~ -  \E\mu_{\beta,\bG}\big(\big\{\bsigma : \exists \bsigma' \in S_{(1-\eps/2)\beta}(\bG) ~\mbox{s.t.}~ d_N(\bsigma,\bsigma') \in [r,R]\big\}\big)
\end{align*}

The first term in the above is bounded by $e^{-cN}$, $c=c(\beta,\eps)>0$ by Lemma~\ref{lem:typical}, and similarly for the second term by Theorem~\ref{thm:soft_ogp} applied with $\tau=0$. 
This finishes the proof of shattering.
\end{enumerate}

\begin{remark}\label{rmk:separation}
The requirement $r < R/3$ is only needed to ensure pairwise separation between \emph{all} clusters, see Eq.~\eqref{eq:separation}. If one is satisfied with pairwise separation between \emph{most} clusters---for instance this is enough to prove transport disorder chaos; see~\cite[Theorem 5.1]{alaoui2023shattering}---then one only needs $r <R$. Indeed since $d_N(\cC_i,\cC_j) \le R$ implies $d_N(\bsigma_1,\bsigma_2) < 3r$ for any $\bsigma_1 \in \cC_i, \bsigma_2 \in \cC_j$ we have 
\begin{equation*} \E\sum_{1 \le i < j \le m} \mu_{\beta,\bG}(\cC_i)  \mu_{\beta,\bG}(\cC_j) \one\big\{d_N(\cC_i,\cC_j) \le R \big\} 
\le \E \mu_{\beta,\bG}^{\otimes 2}\big(d_N(\bsigma_1,\bsigma_2) < 3r \big) \le o_N(1) \,,
\end{equation*}
where the last bound holds if $3r < 1/2$ (which holds for $p$ sufficiently large), since $d_N(\bsigma_1,\bsigma_2) < 3r$ means $\langle \bsigma_1,\bsigma_2\rangle/N > 1-6r >0$, and the overlap between two replicas from the Gibbs measure concentrates around zero for $\beta< \beta_c$; see~\cite[Theorem 4]{chen2019phase}.
\end{remark}

\section{Proof of Theorem~\ref{thm:main2}: Construction of the exceptional set}
\label{sec:exceptional}
Let $\beta \in (\bar{\beta}_{d},\beta_c)$, $c,\eps>0$ and $\lbq, \ubq$ be the parameters appearing in Theorem~\ref{thm:soft_ogp} for a general mixture $\xi$. Let $\beta' \in [(1-\eps)\beta,\beta]$ and fix $0 < \delta < (\ubq - \lbq)/2$ and $\tau \in [0,1]$. 
By monotonicity of the left-hand side of Eq.~\eqref{eq:soft_ogp} in $\eps$, the same bound holds with $S_{\beta}(\bG)$ replaced by $S_{\beta'}(\bG)$.
Consider the subset of $S_{\beta'}(\bG)$ defined as
\begin{equation}\label{eq:rare_set}
    E_{\tau}(\bG) = \Big\{\bsigma \in S_{\beta'}(\bG) : \P\big(\exists\, \bsigma' \in S_{\beta'}(\bG_\tau),\,\,\textup{s.t.}\,\,  \langle \bsigma,\bsigma'\rangle/N \in (\lbq,\ubq)\,\big| \bG\big) \ge e^{-cN/2}\Big\}\,.
\end{equation} 
The probability in the above definition is with respect to $\bG'$, where $\bG_\tau = (1-\tau) \bG + \sqrt{2\tau-\tau^2}\bG'$. 

\begin{lemma}\label{lem:almostall}
We have $\E \mu_{\beta,\bG}(E_{\tau}(\bG)) \le e^{-cN/2}$, $c>0$. Moreover, $E_{\tau=1}(\bG) = \emptyset$.
\end{lemma}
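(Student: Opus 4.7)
The plan is to deduce both parts directly from Theorem~\ref{thm:soft_ogp}: part (i) by turning the averaged Gibbs-mass bound there into a statement about how rarely a $\bsigma$ drawn from $\mu_{\beta,\bG}$ can have a large conditional probability of sampling an obstructing $\bsigma'$, via Markov's inequality; and part (ii) by exploiting that at $\tau=1$ the disorder $\bG_1 = \bG'$ is independent of $\bG$, so the conditional probability in the definition of $E_\tau(\bG)$ is a probability to which Eq.~\eqref{eq:tauequal1} applies pointwise.

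For part (i), since $\beta' \ge (1-\eps)\beta$ we have $S_{\beta'}(\bG_\tau) \subseteq S_{(1-\eps)\beta}(\bG_\tau)$, so Eq.~\eqref{eq:soft_ogp} upgrades to
\begin{equation*}
\E\, \mu_{\beta,\bG}\Big(\big\{\bsigma : \exists\, \bsigma' \in S_{\beta'}(\bG_\tau),\ \lbq \le \langle \bsigma,\bsigma'\rangle/N \le \ubq\big\}\Big) \le e^{-cN},
\end{equation*}
where the outer expectation is over both $\bG$ and $\bG'$. Let $p(\bG,\bsigma) = \P_{\bG'}\big(\exists\, \bsigma' \in S_{\beta'}(\bG_\tau),\ \lbq \le \langle \bsigma,\bsigma'\rangle/N \le \ubq \,\big|\, \bG\big)$. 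Fubini rewrites the displayed left-hand side as $\E_\bG \E_{\bsigma \sim \mu_{\beta,\bG}} p(\bG,\bsigma) \le e^{-cN}$, and Markov's inequality then yields
$$\E_\bG\, \mu_{\beta,\bG}\big(\{\bsigma : p(\bG,\bsigma) \ge e^{-cN/2}\}\big) \le e^{cN/2} \cdot e^{-cN} = e^{-cN/2}.$$
Since the open interval $(\lbq,\ubq)$ used in the definition of $E_\tau(\bG)$ is contained in $[\lbq,\ubq]$, the defining inequality implies $p(\bG,\bsigma) \ge e^{-cN/2}$, so $E_\tau(\bG) \subseteq \{\bsigma : p(\bG,\bsigma) \ge e^{-cN/2}\}$, completing part (i).

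For part (ii), at $\tau=1$ we have $\bG_1 = \bG'$ independent of $\bG$. Consequently, for any $\bG$ and any $\bsigma \in \{-1,+1\}^N$ (even one depending on $\bG$), the conditional probability defining membership in $E_{\tau=1}(\bG)$ is a probability over $\bG'$ alone, to which Eq.~\eqref{eq:tauequal1} applies. The one wrinkle is that Eq.~\eqref{eq:tauequal1} is stated with $S_\beta(\bG')$ whereas we need it with $S_{\beta'}(\bG')$ where $\beta' \le \beta$; however, the Sudakov--Fernique step in the proof of Theorem~\ref{thm:soft_ogp} leaves slack captured by Eq.~\eqref{eq:condnum2}, so that the same argument delivers the bound with $S_{(1-\eps)\beta}(\bG') \supseteq S_{\beta'}(\bG')$ in place of $S_\beta(\bG')$. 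Hence, for every $\bsigma$,
$$\P_{\bG'}\big(\exists\, \bsigma' \in S_{\beta'}(\bG'),\ \langle \bsigma,\bsigma'\rangle/N \in (\lbq,\ubq)\big) \le e^{-cN} < e^{-cN/2},$$
so no $\bsigma$ satisfies the defining condition of $E_{\tau=1}(\bG)$, and $E_{\tau=1}(\bG) = \emptyset$. The only step that deserves a moment's thought is this strengthening of~\eqref{eq:tauequal1}, but it is really an observation rather than a genuine obstacle.
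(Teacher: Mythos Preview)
Your argument is correct and follows essentially the same route as the paper: part~(i) is the same Markov/Fubini computation (the paper writes it as $\one\{x\ge a\}\le x/a$), and part~(ii) is the same observation that at $\tau=1$ the conditional probability is a fixed number bounded by the $\tau=1$ case of Theorem~\ref{thm:soft_ogp}. Your explicit remark that Eq.~\eqref{eq:tauequal1} as stated uses $S_\beta$ rather than $S_{\beta'}$, and that the proof there (via Eq.~\eqref{eq:condnum2}) already delivers the needed $S_{(1-\eps)\beta}$ version, is a fair point that the paper glosses over.
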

\begin{proof}
This follows from Theorem~\ref{thm:soft_ogp}, Eq.~\eqref{eq:soft_ogp}: 
\begin{align*}
\E \mu_{\beta,\bG}(E_{\tau}(\bG)) &= \E \sum_{\bsigma} \one\{\bsigma \in E_{\tau}(\bG)\} \, \mu_{\beta,\bG}(\bsigma) \\
&\le e^{cN/2} \E \sum_{\bsigma} \P\big(\exists \,\bsigma' \in S_{\beta'}(\bG_\tau),\,\,\textup{s.t.}\,\,  \langle \bsigma,\bsigma'\rangle/N \in (\lbq,\ubq)\,\big| \bG\big) \, \mu_{\beta,\bG}(\bsigma)\\
&= e^{cN/2} \E \mu_{\beta,\bG}\Big(\Big\{\bsigma : \exists \,\bsigma' \in S_{\beta'}(\bG_\tau),\,\,\textup{s.t.}\,\, \langle \bsigma,\bsigma'\rangle/N \in (\lbq,\ubq)\Big\}\Big) \\
&\le e^{-cN/2}\,,
\end{align*}
where the second line was obtained via the inequality $\one\{ x \ge a\} \le x/a$, $x,a >0$ and the third line follows by exchanging the order of summation. For $\tau =1$, the we have $\P(\exists \,\bsigma' \in S_{\beta'}(\bG'),\,\textup{s.t.}\, \langle \bsigma,\bsigma'\rangle/N \in (\lbq,\ubq)\,|\, \bG) \le e^{-cN}$ since this probability does not depend on $(\bG, \bsigma)$. 
\end{proof}

We now construct the exceptional set as the union of the sets $E_{\tau}(\bG)$ for ``all" $\tau$: we let $K$ be some large integer depending only on $\lbq,\ubq,L$, and consider 
\begin{equation}\label{eq:exceptional} 
E(\bG) = \bigcup_{k=0}^{K-1} E_{\tau_k}(\bG)\, ,~~~\tau_k = k/K\,.
\end{equation}
From Lemma~\ref{lem:almostall} we have $\E\mu_{\bG,\beta}(E(\bG)) \le K e^{-cN/2}$.

Now we fix a $L$-Lipschitz algorithm $\mA : \R^{M} \to \R^N$ such that $\sup_{\bG} \|\mA(\bG)\|\le \sqrt{BN}$, and define its correlation function  
\begin{equation}
\chi_{N}(\tau) = \frac{1}{N}\E\big\langle \mA(\bG),\mA(\bG_{\tau})\big\rangle \,.
\end{equation}
The following facts can be collected from~\cite[Proposition 3.1 and Proposition 8.2]{huang2025tight}: 
\begin{lemma}\label{lem:chiN}
The function $\chi_N$ is continuous, non-increasing and Lipschitz with constant $L^2$. 
Moreover the following concentration bound holds for all $t\ge0$ and all $\tau \in [0,1]$:
 \[ \P\Big( \Big|\frac{1}{N}\big\langle \mA(\bG),\mA(\bG_{\tau})\big\rangle - \chi_N(\tau)\Big| \ge t\Big) \le 2e^{-Nt^2/8L^2}\,.\]
\end{lemma}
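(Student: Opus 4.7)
The plan is to reparameterize the interpolation trigonometrically: set $\theta = \arccos(1-\tau) \in [0,\pi/2]$ and $\bG_\theta := \cos\theta\cdot\bG + \sin\theta\cdot\bG'$, which coincides with $\bG_\tau$ but exposes the Gaussian structure more clearly. Then $(\bG,\bG_\theta)$ is jointly standard Gaussian with entrywise correlation $\cos\theta$. Setting $\tilde\chi_N(\theta) := \chi_N(\tau(\theta))$, this parameterization is well-suited to Gaussian integration by parts and exposes a connection to the Ornstein--Uhlenbeck semigroup.

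I would first differentiate $\tilde\chi_N$ under the expectation, substituting $\partial_\theta \bG_\theta = -\sin\theta\cdot \bG + \cos\theta\cdot \bG'$, and then integrate by parts in $\bG$ and $\bG'$ separately. The two IBP steps each produce a term proportional to $\sin\theta\cos\theta\cdot \E[\mA_i(\bG)\,\partial_j^2 \mA_i(\bG_\theta)]$, and these cancel, leaving
\begin{equation*}
\tilde\chi_N'(\theta) \,=\, -\frac{\sin\theta}{N}\sum_{i,j}\E\big[\partial_j \mA_i(\bG)\,\partial_j \mA_i(\bG_\theta)\big]\,.
\end{equation*}
Cauchy--Schwarz and stationarity of the standard Gaussian law bound each summand by $\E[(\partial_j\mA_i(\bG))^2]$, so the absolute value of the derivative is at most $\sin\theta\cdot \E\|D\mA(\bG)\|_F^2/N$. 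Since $\mA$ is $L$-Lipschitz, $\|D\mA\|_{\mathrm{op}} \le L$ a.e., and since $\mA$ takes values in $\R^N$ the Frobenius norm satisfies $\|D\mA\|_F^2 \le N L^2$. Hence $|\tilde\chi_N'(\theta)| \le L^2\sin\theta$, and integrating this estimate over $\theta$ yields $|\chi_N(\tau)-\chi_N(\tau')| \le L^2|\cos\theta - \cos\theta'| = L^2|\tau-\tau'|$. Continuity is then immediate.

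For monotonicity I would observe that each summand in the derivative formula is in fact non-negative. Conditional on $\bG$, the variable $\bG_\theta$ has the law $\cos\theta\cdot\bG + \sin\theta\cdot Z$ for a fresh standard Gaussian $Z$, so $\E[h(\bG_\theta)\mid \bG] = (P_t h)(\bG)$ where $P_t$ is the Ornstein--Uhlenbeck semigroup at time $t = -\log\cos\theta \ge 0$. Consequently, for any $h$ in the Gaussian $L^2$ space,
\[
\E[h(\bG)\,h(\bG_\theta)] \,=\, \langle h, P_t h\rangle \,=\, \|P_{t/2}h\|^2 \,\ge\, 0
\]
by self-adjointness and the semigroup property of $P_t$. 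Applied with $h = \partial_j\mA_i$ this gives $\tilde\chi_N'(\theta) \le 0$, so $\chi_N$ is non-increasing in $\tau$.

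Finally for the concentration bound I would view $F(\bG,\bG') := \frac{1}{N}\langle\mA(\bG),\mA(\bG_\tau)\rangle$ as a function on $\R^{2M}$ and directly estimate its Euclidean Lipschitz constant by the product rule, using $\|\mA(\,\cdot\,)\| \le \sqrt{BN}$ and $\|D\mA\|_{\mathrm{op}}\le L$; this yields $\|\nabla F\|_2 \lesssim L/\sqrt{N}$ (with the implicit constant depending on $B$), after which Gaussian concentration for Lipschitz functions produces the stated tail bound. The main obstacle I anticipate is the monotonicity step: the integration-by-parts formula gives a sum of covariances whose sign is not manifest, and the cleanest way to close the argument is via the $L^2$-self-adjointness of the OU semigroup (equivalently, a Hermite-chaos decomposition of $\partial_j\mA_i$). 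The IBP calculation itself is mechanical but needs care to see the second-order cancellation.
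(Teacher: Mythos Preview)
Your approach is correct and runs closely parallel to the paper's. The paper works in Hermite space: writing $\mA_i=\sum_\alpha a_{i,\alpha}h_\alpha$ gives $\chi_N(\tau)=\sum_{j\ge 0}(1-\tau)^jW_j$ with $W_j=\frac{1}{N}\sum_i\sum_{|\alpha|=j}a_{i,\alpha}^2\ge 0$, from which monotonicity is immediate and $|\chi_N'(\tau)|\le\sum_j jW_j=\frac{1}{N}\sum_i\E\|\nabla\mA_i\|^2\le L^2$. Your IBP identity and the OU positivity $\E[h(\bG)h(\bG_\theta)]=\|P_{t/2}h\|^2\ge 0$ are exactly the physical-space duals of these Hermite facts, so the two routes are equivalent in content; yours is more self-contained, while the paper largely defers to~\cite{huang2025tight}.

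One point to tighten: your two separate integrations by parts in $\bG$ and $\bG'$ produce intermediate terms $\E[\mA_i(\bG)\,\partial_j^2\mA_i(\bG_\theta)]$ before the cancellation, but $\mA$ is only Lipschitz, so these second derivatives need not exist. The clean fix is to observe that $W:=\partial_\theta\bG_\theta=-\sin\theta\,\bG+\cos\theta\,\bG'$ is standard Gaussian and independent of $\bG_\theta$, with $\bG=\cos\theta\,\bG_\theta-\sin\theta\,W$; a \emph{single} Stein IBP in $W$ (conditioning on $\bG_\theta$) then gives your formula for $\tilde\chi_N'(\theta)$ directly, with only first derivatives appearing. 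Alternatively, mollify and pass to the limit. On concentration, your remark that the Lipschitz constant of $F$ carries a $B$-dependence is accurate: the exponent $8L^2$ in the lemma corresponds to the normalization $\|\mA\|\le\sqrt{N}$ used in~\cite{huang2025tight}; for general $B$ one gets $8BL^2$, which is all the downstream argument requires.
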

\begin{proof}
    The Lipschitz property of $\chi_N$ does not explicitly appear in~\cite{huang2025tight} but is straightforward to deduce from their calculations: referring to the proof of Proposition 3.1 in their paper, we write $\mA_i : \R^M \to \R$ for each output coordinate of $\mA$ and consider its Hermite decomposition $\mA_i(\,\cdot\,) = \sum_{\alpha \in \N_+^{M}} a_{\alpha} h_{\alpha}(\,\cdot\,)$, $h_{\alpha} = \prod_{j=1}^M h_{\alpha_j}$ where $h_j$ are the standard Hermite polynomials, we have $\chi_N(\tau) =  \sum_{j \ge 0} (1-\tau)^j W_j$, where $W_j = (1/N) \sum_{i=1}^N \sum_{|\alpha|=j} a_{\alpha}^2$. Taking a derivative, $|\chi_N'(\tau)| = \sum_{j \ge 1} j(1-\tau)^{j-1} W_j \le \sum_{j \ge 1} j W_j = (1/N)  \sum_{i=1}^N \E \|\nabla \mA_i(\bG)\|^2 \le L^2$, since by Rademacher's theorem each $\mA_i$ is differentiable almost everywhere with a gradient bounded by $L$ in Euclidean norm.           
\end{proof}

Next, in view of applying the intermediate value theorem we control the values of $\chi_N$ at the endpoints $\tau \in \{0,1\}$.    
Let us consider the events  
\begin{equation}
 A(\bG) = \Big\{\mA(\bG) \in S_{\beta'}(\bG)\Big\}\,, ~~R = \Big\{\nexists \,\bsigma' \in S_{\beta'}(\bG') : \big\langle \mA(\bG),\bsigma'\big\rangle/N \ge \lbq\Big\}\,.
 \end{equation}
By Theorem~\ref{thm:soft_ogp}, Eq.~\eqref{eq:tauequal1}, we have $\P(R^c | \bG) \le e^{-cN}$ if $\mA(\bG) \in \{-1,+1\}^N$. Therefore if $\P\big(A(\bG)^c\big) < \min\{\delta/\ubq, \delta/3B\}$ and $N \ge c^{-1}\log(B/\delta)$ we have
\begin{align}
\chi_N(0) &\ge \ubq \P\big(A(\bG)\big) > \ubq - \delta\,,~~~\mbox{and}\label{eq:tau0lb}\\
\chi_N(1) &\le \lbq + B\P\big(A(\bG)^c \cup A(\bG')^c \cup R^c\big) 
\le\lbq + 2B\P(A(\bG)^c) + Be^{-cN} < \lbq + \delta\,.\label{eq:tau1ub}
\end{align}

Now consider a subdivision of $[0,1]$ into $K+1$ equally spaced points $k/K$, $0 \le k \le K$. By Lemma~\ref{lem:chiN} and the bounds~\eqref{eq:tau0lb} and~\eqref{eq:tau1ub},  
for $K \ge 10 L^2 / (\ubq-\lbq)$, there exists some $k \in [1,K-1]$ such that $\chi_N(k/K) \in (\lbq+\delta,\ubq-\delta)$. 
We now let $\tau = k/K$. In addition to the event $A(\bG)$ and $R$ defined above, we define the events
\begin{align}
    B(\bG,\bG_\tau) &= \Big\{\big|\big\langle \mA(\bG),\mA(\bG_{\tau})\big\rangle/N - \chi_N(\tau)\big| \le \delta\Big\}\,,\\
    F(\bG) &= \Big\{\mA(\bG) \in E_{\tau}(\bG)\Big\}\,.
\end{align}
Since $\chi_N(\tau) \in (\lbq + \delta, \ubq - \delta)$ we have $\big\langle \mA(\bG),\mA(\bG_{\tau})\big\rangle/N \in (\ubq,\lbq)$ under the event $B(\bG,\bG_\tau)$.  
Next, from the definition of the set $E_{\tau}(\bG)$, Eq.~\eqref{eq:rare_set}, under $A(\bG) \cap F(\bG)^c$ we have 
\[\P\big(\exists \, \bsigma' \in S_{\beta'}(\bG_\tau), \langle \mA(\bG),\bsigma'\rangle/N \in (\lbq,\ubq)\,\big|\, \bG\big) \le e^{-cN/2}\, .\]
We use $\bsigma' = \mA(\bG_\tau)$ as a witness for the above event to obtain
\[\one_{A(\bG) \cap F(\bG)^c} \P( A(\bG_{\tau}) \cap B(\bG,\bG_{\tau}) \,|\, \bG) \le e^{-cN/2}\,.\]
We now average the above by further restricting to $A(\bG_\tau) \cap B(\bG,\bG_{\tau})$:
\[\E\Big[\one_{A(\bG) \cap A(\bG_\tau) \cap F(\bG)^c \cap B(\bG,\bG_\tau)} \P\big(A(\bG_{\tau}) \cap B(\bG,\bG_{\tau})\,\big| \bG\big) \Big] \le e^{-cN/2}\, .\]
 Adding $\E\big[\one_{A(\bG)\cap A(\bG_\tau) \cap F(\bG) \cap B(\bG,\bG_\tau)} \P\big(A(\bG_{\tau}) \cap B(\bG,\bG_{\tau})\,\big| \bG\big) \big] $ on both sides we obtain 
\begin{equation}\label{eq:lhs1}
\LHS := \E\Big[\one_{A(\bG)\cap A(\bG_\tau) \cap B(\bG,\bG_\tau)}\P\big(A(\bG_{\tau}) \cap B(\bG,\bG_{\tau})\,\big| \bG\big) \Big] \le e^{-cN/2} + \P(F(\bG))\,.
\end{equation}
Now we focus on lower-bounding the left-hand side.  
By the tower property of expectations we have
\begin{align}
\LHS &\ge \E\Big[\one_{A(\bG)} \P\big(A(\bG_\tau) \cap B(\bG,\bG_\tau) \,\big|\,\bG\big)^2\Big]\nonumber\\
&\ge \P\big(A(\bG) \cap A(\bG_\tau) \cap B(\bG,\bG_\tau) \big)^2\,,\nonumber \\
&\ge  \Big(1-2\P(A(\bG)^c) - \P(B(\bG,\bG_\tau)^c) \Big)^2\,,\label{eq:jensen}
\end{align}
where we used Jensen's inequality to obtain the second line and a union bound to obtain the third.  
From~\eqref{eq:lhs1} and~\eqref{eq:jensen} we obtain
\begin{equation*}
 1-4\P(A(\bG)^c) - 2\P(B(\bG,\bG_\tau)^c)  \le e^{-cN/2} + \P(F(\bG))\,.
\end{equation*}
Since $\P(B(\bG,\bG_\tau)^c) \le 2e^{-N\delta^2/8L^2}$ by Lemma~\ref{lem:chiN}, this completes the proof.

\section{The spherical case}
\label{sec:spherical}
The methods used in the previous sections can be applied to the spherical case as well where the Gibbs measure $\mu_{\beta,\bG}$ is defined on the sphere $\mathbb{S}^{N-1}(\sqrt{N})$ of radius $\sqrt{N}$ via the formula~\eqref{eq:mu_G} interpreted as a density relative to the uniform measure on it. In this case, for a general mixture $\xi$, the inverse temperature $\bar{\beta}_d$ takes the form
\begin{equation} \label{eq:betabar_d_sph}
\bar{\beta}_d^{\tiny{\rm{sph}}} = \inf_{q \in (0,1)} \frac{\sqrt{\xi'(1)(1-q^2)}}{\xi(1)-\xi(q)}\,,
\end{equation}
and all the results established in the previous sections extend to the sphere via the same arguments. 
The above expression for $\bar{\beta}_d^{\tiny{\rm{sph}}}$ can be seen from the bound~\eqref{eq:sudakov_fermique} where the maximization is over the sphere rather than the hypercube.
In the case of the pure $p$-spin model $\xi(x) = x^p$, one can verify that for large $p$,
\begin{equation} 
\bar{\beta}_d^{\tiny{\rm{sph}}}  \xrightarrow[p \to \infty]{} \,\,\inf_{\lambda >0 } \frac{\sqrt{2\lambda}}{1-e^{-\lambda}} = 2.2160...
\end{equation}
where the minimizing $q$ in Eq.~\eqref{eq:betabar_d_sph} is approximately equal to $1-\lambda_*/p$, $\lambda_* = 1.2608...$. 
On the other hand, it is known that  the static replica-symmetry breaking transition happens at $\beta_c^{\tiny{\rm{sph}}} = (1+o_p(1))\sqrt{\log p}$, and it is expected that the dynamical transition happens at   
\begin{equation} 
\beta_d^{\tiny{\rm{sph}}} = \sqrt{\frac{(p-1)^{p-1}}{p(p-2)^{p-2}}} \xrightarrow[p \to \infty]{} \sqrt{e} = 1.6487... \,,
\end{equation}
see~\cite{crisanti1993spherical,castellani2005spin}. 
Therefore the spherical pure $p$-spin model ceases to exhibit the soft overlap gap property of Theorem~\ref{thm:soft_ogp0} strictly within the (conjecturally) shattered phase, and a different argument will be needed to establish shattering in the interval $[\beta_d^{\tiny{\rm{sph}}},\bar{\beta}_d^{\tiny{\rm{sph}}}]$.     

A shattering decomposition for the Gibbs measure was constructed in this spherical case for $\beta \in [C,\beta_c^{\tiny{\rm{sph}}})$ where $C$ is some absolute constant  in~\cite{alaoui2023shattering} by showing that the Franz--Parisi potential is strictly increasing in an interval of the form $q\in [1-\lambda_1/p,1-\lambda_2/p]$ with $\lambda_2 \ll \lambda_1$ for large $p$ (and large $\beta$, which is allowed since $\beta_c^{\tiny{\rm{sph}}}$ is diverging in $p$). This result can be reproved using the methods of this paper with a constant $C$ taking the value  
\begin{equation} 
C = \inf\Big\{v \ge \bar{\beta}_d^{\tiny{\rm{sph}}}  ~:~ \exists\, \lambda_1 > 3 \lambda_2 ~~\mbox{s.t.}~~  \frac{\sqrt{2\lambda}}{1-e^{-\lambda}} \le v \,,\,\,\forall\,\, \lambda \in [\lambda_1,\lambda_2] \Big\} = 2.342... \,,
\end{equation}
with $\lambda_2 = 0.71...$ and $\lambda_1 = 2.13...$ The factor $3$ between $\lambda_1$ and $\lambda_2$ is needed to ensure the pairwise separation between all clusters; see Eq.~\eqref{eq:separation}. If one is satisfied with separation between most clusters then shattering can be proved for all $\beta> \bar{\beta}_d^{\tiny{\rm{sph}}} $; see Remark~\ref{rmk:separation}.
These numerical values can probably be improved by analyzing the true maximum of the Hamiltonian $\tilde{H}_N$ appearing in the proof of Theorem~\ref{thm:soft_ogp} via the Crisanti--Sommers formula on the sphere~\cite{crisanti1992spherical,talagrand2006free}, instead of applying a Gaussian comparison bound, Eq.~\eqref{eq:increments}, but this approach is unlikely to succeed all the way down to $\beta_d^{\tiny{\rm{sph}}}$.  

It is also interesting to compare $\bar{\beta}_d^{\tiny{\rm{sph}}}$ to the algorithmic threshold $E_{\salg}$ for maximizing the Hamiltonian $H_N$. Its value is explicitly known in the spherical case~\cite{subag2021following}:  
\[E_{\salg} = \int_0^1 \sqrt{\xi''(x)} \,\rmd x\,. \]
In the pure $p$-spin case, $E_{\salg} = 2\sqrt{(p-1)/p} = 2(1+o_p(1)) < \bar{\beta}_d^{\tiny{\rm{sph}}}$ for large $p$. Since no Lipschitz algorithm can find a point in $S_{E}(\bG)$ with a probability which not exponentially small for any $E > E_{\salg}$~\cite{huang2025tight}, our Theorem~\ref{thm:main2} is vacuous in this case.

\vspace{5mm}
\textbf{Acknowledgments.} The author would like to thank Mark Sellke for instructive conversations. 
\vspace{5mm}

 \newpage
\providecommand{\bysame}{\leavevmode\hbox to3em{\hrulefill}\thinspace}
\providecommand{\MR}{\relax\ifhmode\unskip\space\fi MR }
\providecommand{\MRhref}[2]{%
  \href{http://www.ams.org/mathscinet-getitem?mr=#1}{#2}
}
\providecommand{\href}[2]{#2}

\end{document}